\theoremstyle{plain}
\newtheorem{theorem}{Theorem}
\newtheorem{prop}{Proposition}
\newtheorem{lemma}[prop]{Lemma}
\begin{document}

\subjclass[2010]{11D41, 11B68}
\keywords{Diophantine equations, Bernoulli polynomials, Euler polynomials}

\title[Effective results for polynomial values of power sums]{Effective results for polynomial values of (alternating) power sums of arithmetic progressions}

\author[Andr\'as Bazs\'o]{Andr\'as Bazs\'o}

\address{Andr\'as Bazs\'o \newline
\indent Institute of Mathematics \newline
\indent University of Debrecen \newline
\indent P.O. Box 400, H-4002 Debrecen, Hungary \newline
\indent and \newline
\indent HUN-REN-UD Equations, Functions, Curves and their Applications Research Group}
\email{bazsoa@science.unideb.hu}


\begin{abstract}
We prove effective finiteness results concerning polynomial values of the sums 
$$
b^k  +\left(a+b\right)^k + \cdots + \left(a\left(x-1\right) + b\right)^k
$$
and
$$
b^k - \left(a+b\right)^k + \left(2a+b\right)^k - \ldots + (-1)^{x-1} \left(a\left(x-1\right) + b\right)^k ,
$$
where $a \neq 0,b, k$ are given integers with $\gcd(a,b)=1$ and $k \geq 2$. 
\end{abstract}

\maketitle

\section{Introduction}

Problems concerning power sums and alternating power sums of consecutive integers has a long history in the literature of combinatorics and number theory. It is well known that the sum
\begin{equation}
S_k (n) = 1^k + 2^k + \ldots + (n-1)^k
\end{equation}
can be expressed by the Bernoulli polynomials $B_k(x)$ as
\begin{equation}
S_k (n) = \frac{1}{k+1} \left(B_{k+1} (n) - B_{k+1}\right),
\end{equation}
where the polynomials $B_k(x)$ are defined by the generating series 
$$
\frac{te^{tx}}{e^t-1}=\sum_{m=0}^{\infty}B_{m}(x)\frac{t^m}{m!}
$$
and $B_{k+1} = B_{k+1} (0)$. Hence $S_k$ can be extended to real values $x$, i.e., to the polynomial 
\begin{equation}
\texttt{S}_k (x) = \frac{1}{k+1} \left(B_{k+1} (x) - B_{k+1}\right).
\end{equation}

It is also well known that the alternating power sum
$$
T_k \left(n\right) := -1^k +2^k - \cdots + (-1)^{n-1} (n-1)^k
$$
can be expressed by means of the classical Euler polynomials $E_k (x)$ via:
$$
T_k \left(n\right) = \frac{E_k (0) + (-1)^{n-1} E_k (n)}{2},
$$
where the classical Euler polynomials $E_k (x)$ are usually defined by the generating function
$$
\frac{2 e^{xt}}{e^t + 1} = \sum^{\infty}_{m=0}{E_m (x) \frac{t^m}{m!}} \ \ \ (|t| < \pi).
$$

For the properties of Bernoulli and Euler polynomials which will be often used in this paper, sometimes without special reference, we refer to the paper of Brillhart \cite{Brill} and the book of Abramowitz and Stegun \cite{AbrSt}.

A classical problem of Lucas \cite{L}, from 1875, was the study of square values of $\texttt{S}_k (x)$. Later, in 1956, Sch\"affer \cite{schaf} investigated $n$-th power values, that is, the Diophantine equation  
\begin{equation} \label{eq:Sch}
\texttt{S}_k (x) = y^n   \ \ \ \ \text{in integers $x,y$.}
\end{equation}
For $k \geq 1$, $n \geq 2$ he proved an ineffective finiteness result on the solutions $x,y$ of \eqref{eq:Sch} provided that $(k,n) \notin \left\{(1,2), (3,2), (3,4), (5, 2)\right\}$. In the exceptional cases $(k,n)$ he proved the existence of infinitely many solutions. Moreover, Sch\"affer proposed a still unproven conjecture which says that if $(k,n)$ is not in the above exceptional set, then the only nontrivial solution of equation \eqref{eq:Sch} is $(k, n, x, y) = (2, 2, 24, 70)$. In 1980, Gy\H{o}ry, Tijdeman and Voorhoeve \cite{GyTV} proved effective finiteness for the solutions of \eqref{eq:Sch} in the general case when, in \eqref{eq:Sch}, $n$ is also unknown. Several generalizations of \eqref{eq:Sch} have been considered, e.g. in the papers of Voorhoeve, Gy\H{o}ry and Tijdeman \cite{VGyT}, Brindza \cite{Brind}, Dilcher \cite{Dilch} and Urbanowicz \cite{Urb1,Urb2,Urb3}.
Sch\"affer's conjecture has been confirmed only in a few cases: for $n=2$ and $k \leq 58$ by Jacobson, Pint\'er and Walsh \cite{JPW}; and for $n \geq 2$ and $k \leq 11$ by Bennett, Gy\H{o}ry and Pint\'er \cite{BGyP}. For further generalizations of \eqref{eq:Sch} and related results see the survey paper of Gy\H{o}ry and Pint\'er \cite{GyP} and the references given there.

In \cite{BBKPT}, Bilu et al. considered the Diophantine equations
\begin{equation} \label{eq:Skx}
\texttt{S}_k (x) = \texttt{S}_{\ell} (y),
\end{equation}
and
\begin{equation} \label{eq:SkRl}
\texttt{S}_k (x) = y\left(y+1\right) \left(y+2\right) \ldots \left(y+ \left(\ell-1\right)\right).
\end{equation}
They proved ineffective finiteness results on the integer solutions $x,y$ of these equations for $k<\ell$, moreover, they established effective statements for certain small values of $k$ and $\ell$. E.g., they proved that, for $\ell \in \left\{2,4\right\}$ and $k\geq 2$, all integer solutions $x,y$ of equation \eqref{eq:SkRl} satisfy $\max \left\{|x|,|y|\right\} < C(k)$, where $C(k)$ is an effectively computable constant depending only on $k$.

For a positive integer $n\geq 2$ and for $a \neq 0, b$ coprime integers, let
\begin{equation}
S_{a,b}^k \left(n\right) = b^k + \left(a+b\right)^k + \left(2a+b\right)^k + \ldots + \left(a\left(n-1\right) + b\right)^k .
\end{equation}
Further, let
\begin{equation}
T_{a,b}^k \left(n\right) = b^k - \left(a+b\right)^k + \left(2a+b\right)^k - \ldots + (-1)^{n-1} \left(a\left(n-1\right) + b\right)^k.
\end{equation}

Howard \cite{How} showed, by means of generating functions, that the above power sums are related to the Bernoulli and Euler polynomials in the following ways
\begin{equation} 
S_{a,b}^k \left(n\right) = \frac{a^k}{k+1} \left(B_{k+1} \left(n+ \frac{b}{a}\right) - B_{k+1} \left(\frac{b}{a}\right)\right),
\end{equation}
\begin{equation} \label{eq:Tkabn} 
T_{a,b}^k \left(n\right) = \frac{a^k}{2} \left(E_k \left(\frac{b}{a}\right) + (-1)^{n-1} E_k \left(n + \frac{b}{a}\right)\right),
\end{equation}
respectively.

Thus we can extend $S_{a,b}^k$ for every real value $x$ as
\begin{equation} \label{eq:How1}
\texttt{S}_{a,b}^k \left(x\right) =  \frac{a^k}{k+1} \left(B_{k+1} \left(x+ \frac{b}{a}\right) - B_{k+1} \left(\frac{b}{a}\right)\right),
\end{equation}
and, depending on the power of $-1$ in \eqref{eq:Tkabn}, the following polynomial extensions arise for $T_{a,b}^k \left(n\right)$:
\begin{equation} \label{eq:How2}
\texttt{T}_{a,b}^{k+} (x) = \frac{a^k}{2} \left(E_k \left(\frac{b}{a}\right) + E_k \left(x + \frac{b}{a}\right)\right),
\end{equation}
\begin{equation} \label{eq:How3}
\texttt{T}_{a,b}^{k-} (x) = \frac{a^k}{2} \left(E_k \left(\frac{b}{a}\right) - E_k \left(x + \frac{b}{a}\right)\right).
\end{equation}
Clearly, for positive integer values $x$, we have $\texttt{T}_{a,b}^{k+} (x) = T_{a,b}^k \left(x\right)$ if $x$ is odd, and $\texttt{T}_{a,b}^{k-} (x) = T_{a,b}^k \left(x\right)$ if $x$ is even. 

In \cite{BKLP}, Kreso, Luca, Pint\'er and the present author generalized the results of Bilu et al. \cite{BBKPT} on equation \eqref{eq:Skx} to the equation
\begin{equation} \label{eq:Skabx}
\texttt{S}_{a,b}^k \left(x\right) = \texttt{S}_{c,d}^{\ell} \left(y\right)
\end{equation}
where $x,y$ are unknown integers, and $k,\ell,a,b,c,d$ are given positive integers with $k<\ell, \ \gcd (a,b) = \gcd (c,d) = 1$.
Recently, in \cite{BKLPR}, Kreso, Luca, Pint\'er, Rakaczki and the author gave further generalizations of the results of  \cite{BBKPT} to the equation 
\begin{equation} \label{eq:RS}
\texttt{S}_{a,b}^k (x) = y\left(y+c\right) \left(y+2c\right) \ldots \left(y+ \left(\ell-1\right)c\right)
\end{equation}
where $x,y$ are unknown integers, and $k,\ell,a,b,c$ are given positive integers with $\gcd (a,b) =1$.

The Diophantine equations
\begin{equation} \label{eq:Sg}
\texttt{S}_{a,b}^k \left(x\right) = g(y),
\end{equation}
\begin{equation} \label{eq:Tg+}
\texttt{T}_{a,b}^{k+} \left(x\right) = g(y)
\end{equation}
and
\begin{equation} \label{eq:Tg-}
\texttt{T}_{a,b}^{k-} \left(x\right) = g(y),
\end{equation}
where $g(y) \in \mathbb{Q}[y]$ with $\deg g \geq 3$ have been investigated in the literature first in the case $(a,b)=(1,0)$. Rakaczki \cite{R} and independently Kulkarni and Sury \cite{KS} characterized those pairs $(k,g(y))$ for which equation \eqref{eq:Sg} has infinitely many integer solutions. Kreso and Rakaczki \cite{KR} proved an analogous result for equations \eqref{eq:Tg+} and \eqref{eq:Tg-}. For further related results we refer to the papers of Kulkarni and Sury \cite{KS1,KS2}, and of Bennett \cite{Ben13}.

The results of \cite{R,KS,KR} have been extended to the general case, i.e, to equations \eqref{eq:Sg}-\eqref{eq:Tg-} by the present author in \cite{BA2}. We note that all the mentioned results on equations \eqref{eq:Sg}-\eqref{eq:Tg-} were ineffective as their proofs mainly relied on the finiteness criterion of Bilu and Tichy \cite{BiluTichy}, and on the decomposition properties of the polynomials involved (which were described in the papers \cite{BA,BPS,BBKPT}). For a detailed discussion on the theory of polynomial decomposition we refer to the monograph of Schinzel \cite{Sch}.

In the present paper we consider equations \eqref{eq:Sg}-\eqref{eq:Tg-} in the cases when $g(y) \in \mathbb{Q}[y]$ with $\deg g = 2$, and for $g(y)=cy^{\ell}+d$ with $c,d \in \mathbb{Q}$, $c\neq 0$ and $\ell \geq 2$. In the quadratic case we provide effective upper bounds on the solutions $(x,y)$, while in the latter case we also let $\ell$ be unknown, and prove effective finiteness for the triple $(x,y,\ell)$ as well. Our results have important special cases. They imply, e.g., that there are only finitely many power values of $\texttt{S}_{a,b}^k \left(x\right)$ which can be considered as a generalization of the results of \cite{GyTV}. Further, our Theorems \ref{thm:quadpolS}, \ref{thm:quadpolT+}, \ref{thm:quadpolT-} extend the main results of \cite{BA2} in an effective way to the quadratic case. 

\section{Main results} \label{sec:mainres}

Let $a \neq 0, b,k \in \mathbb{Z}, \gcd(a,b)=1$, and let $g \in \mathbb{Q}[x]$ be an arbitrary polynomial. Consider the Diophantine equations
\begin{equation} \tag{14}
\texttt{S}_{a,b}^k \left(x\right) = g(y),
\end{equation}
\begin{equation} \tag{15}
\texttt{T}_{a,b}^{k+} \left(x\right) = g(y)
\end{equation}
and
\begin{equation} \tag{16}
\texttt{T}_{a,b}^{k-} \left(x\right) = g(y),
\end{equation}
in $x,y \in \mathbb{Z}$.

First we consider the case when $g$ is a quadratic polynomial. On equation \eqref{eq:Sg} we prove the following.

\begin{theorem} \label{thm:quadpolS}
Let $a,b,k$ and $g$ be as above. Then for $k \geq 2$, $k \notin \left\{3,5\right\}$, and $\deg g = 2$, there exists an effectively computable constant $C_1(a,b,k,g)$ depending only on $a,b,k$ and $g$ such that $\max(|x|,|y|) < C_1(a,b,k,g)$ for each integer solutions of equation \eqref{eq:Sg}.
\end{theorem}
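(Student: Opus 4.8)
The plan is to complete the square on $g$, reducing \eqref{eq:Sg} to a superelliptic equation $W^{2}=G(x)$ with $G\in\mathbb{Z}[x]$, and then to check that $G$ has enough simple zeros for the classical effective theory of such equations to apply. Write $g(y)=py^{2}+qy+r$ with $p,q,r\in\mathbb{Q}$, $p\neq0$. Multiplying \eqref{eq:Sg} by $4p$ and rearranging,
$$
\left(2py+q\right)^{2}=4p\,\texttt{S}_{a,b}^{k}(x)+q^{2}-4pr,
$$
and by \eqref{eq:How1} the right-hand side equals $\kappa\left(B_{k+1}\left(x+\tfrac{b}{a}\right)-c\right)$ with $\kappa=\tfrac{4pa^{k}}{k+1}\in\mathbb{Q}^{*}$ and $c=B_{k+1}\left(\tfrac{b}{a}\right)-\tfrac{(k+1)(q^{2}-4pr)}{4pa^{k}}\in\mathbb{Q}$, all effectively computable from $a,b,k,g$. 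Picking a positive integer $e$ (effectively computable) with $e\left(2py+q\right)\in\mathbb{Z}$ for every $y\in\mathbb{Z}$ and with $G(x):=e^{2}\kappa\left(B_{k+1}\left(x+\tfrac{b}{a}\right)-c\right)\in\mathbb{Z}[x]$, every integer solution $(x,y)$ of \eqref{eq:Sg} gives an integer solution $(x,W)$, $W=e\left(2py+q\right)$, of $W^{2}=G(x)$, where $\deg G=k+1\geq3$, the leading coefficient of $G$ is $e^{2}\kappa\neq0$, and the zeros of $G$ (with multiplicities) are those of $B_{k+1}(t)-c$; conversely a bound for $|x|$ yields bounds for $|W|$ and hence for $|y|$.

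The decisive point is that, for $n=k+1\geq3$ with $n\notin\{4,6\}$ and $c\in\mathbb{Q}$, the polynomial $B_{n}(t)-c$ has at least three distinct zeros of odd multiplicity. Here $B_{n}'(t)=nB_{n-1}(t)$ and $B_{n-1}$ has only simple zeros (so $B_{n-1},B_{n-2}$ are coprime; see \cite{Brill}), whence every multiple zero of $B_{n}(t)-c$ has multiplicity exactly $2$: one may write $B_{n}(t)-c=\lambda\prod_{j=1}^{s}(t-\rho_{j})\prod_{i=1}^{d}(t-\sigma_{i})^{2}$ with pairwise distinct $\rho_{j},\sigma_{i}$, so that $s+2d=n$ and each $\sigma_{i}$ is a simple zero of $B_{n-1}$. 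Determining when $s\leq2$ — equivalently, when $B_{n}(t)-c$ is a constant times a square, a linear factor times a square, or a quadratic times a square — is part of the decomposition theory of Bernoulli polynomials and shows that this can only occur for $n\in\{4,6\}$: for instance $B_{4}(t)+\tfrac{1}{30}=\left(t(t-1)\right)^{2}$, $B_{4}(t)-\tfrac{7}{240}=\left(t-\tfrac12\right)^{2}\left(\left(t-\tfrac12\right)^{2}-\tfrac12\right)$, and $B_{6}(t)-\tfrac{1}{42}=t^{2}(t-1)^{2}\left(t^{2}-t-\tfrac12\right)$ (cf.\ \cite{Brill,BBKPT}). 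Hence, for $k\geq2$ with $k\notin\{3,5\}$, the polynomial $G$ above has at least three distinct zeros of odd multiplicity, so it is of none of the degenerate shapes; by the effective theory of hyperelliptic equations (Baker's method; see \cite{Brind}), all integer solutions of $W^{2}=G(x)$ satisfy $\max(|x|,|W|)<C$ with $C$ effectively computable from $\deg G$ and the height of $G$, i.e.\ from $a,b,k,g$. Together with the bound for $|y|$ coming from the reduction this gives $\max(|x|,|y|)<C_{1}(a,b,k,g)$.

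The routine part is the reduction and the appeal to Baker's method; the main obstacle is the root-structure statement for $B_{n}(t)-c$. The identity $B_{n}'=nB_{n-1}$ together with the coprimality of consecutive Bernoulli polynomials immediately bounds all multiplicities by $2$, but excluding, for every rational $c$ and every $n\geq3$ with $n\notin\{4,6\}$, the configurations with $s\in\{0,1,2\}$ — and recognising $n=4$ and $n=6$ as genuine exceptions, which correspond exactly to the excluded values $k\in\{3,5\}$ because there one can choose a quadratic $g$ for which the equation acquires a one-parameter (Pell-type) family of solutions — is the delicate step, resting on the finer arithmetic of Bernoulli polynomials from \cite{Brill} and the decomposition results of \cite{BBKPT}.
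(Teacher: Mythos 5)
Your proposal is correct and follows essentially the same route as the paper: complete the square in $g$, reduce \eqref{eq:Sg} to a hyperelliptic equation whose right-hand side is a shifted Bernoulli polynomial, invoke the fact that $B_{k+1}(t)+s$ has at least three zeros of odd multiplicity for $k+1\geq 3$, $k+1\notin\{4,6\}$ (Lemma 2.2 of \cite{BKLP}, the paper's Lemma \ref{lem:Bk+sroots}), and apply Brindza's effective theorem \cite{brindza}. The only caveat is your parenthetical claim that $B_{n-1}$ always has only simple zeros, which overstates Brillhart's result (for even index a double factor $x^{2}-x-\beta$ is not excluded); this does not affect the argument, since the decisive root-structure statement is, as in the paper, taken from the cited literature rather than derived from that claim.
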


As we mentioned above, this result extends Theorem 1 of \cite{BA2} in an effective way.

In the alternating case, i.e., for equations \eqref{eq:Tg+},\eqref{eq:Tg-}, we have

\begin{theorem} \label{thm:quadpolT+}
Let $a,b,k$ and $g$ be as above. Then for $k \geq 7$ and $\deg g = 2$, there exists an effectively computable constant $C_2(a,b,k,g)$ depending only on $a,b,k$ and $g$ such that $\max(|x|,|y|) < C_2(a,b,k,g)$ for each integer solutions of equation \eqref{eq:Tg+}.
\end{theorem}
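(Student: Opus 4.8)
The plan is to reduce the equation $\texttt{T}_{a,b}^{k+}(x)=g(y)$ with $\deg g=2$ to a superelliptic equation of the shape $F(X)=Z^2$ with $F$ having at least three simple roots, and then invoke the classical effective theorem of Baker/LeVeque/Brindza on such equations. Writing $g(y)=\alpha y^2+\beta y+\gamma$ with $\alpha,\beta,\gamma\in\mathbb{Q}$, $\alpha\neq 0$, I would first complete the square: there are rationals $u,v$ (effectively bounded in terms of $g$) with $4\alpha g(y)=(2\alpha y+\beta)^2+v$, so the equation becomes
\begin{equation*}
4\alpha\,\texttt{T}_{a,b}^{k+}(x)-v = \bigl(2\alpha y+\beta\bigr)^2 .
\end{equation*}
Substituting the definition \eqref{eq:How2}, and clearing the common denominator coming from $a^k$ and from the denominators of the coefficients of $E_k$, the left-hand side becomes $c\cdot H(x)$ where $c$ is a fixed nonzero rational (effectively bounded) and $H(x)\in\mathbb{Q}[x]$ is, up to a constant factor and an additive constant, the polynomial $E_k\!\left(x+\tfrac{b}{a}\right)$. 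Thus everything comes down to showing that $E_k\!\left(x+\tfrac{b}{a}\right)+\delta$ has at least three roots of odd multiplicity for every constant $\delta$, for $k\geq 7$.

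The key step, therefore, is the arithmetic/combinatorial input on the roots of $E_k(x)+\delta$. Since a shift and a nonzero scaling do not change multiplicities of roots, it suffices to control the polynomial $E_k(x)-\lambda$ for arbitrary $\lambda\in\mathbb{C}$. Here I would lean on the known results about the zeros of Euler polynomials and, more precisely, on the indecomposability/decomposition results for $E_k$ recorded in \cite{BPS,BBKPT} (the very results cited in the Introduction): for $k$ in the relevant range, $E_k(x)$ has the form $e_k(x)\cdot(\text{something of small degree})$ only in very restricted ways, and in particular $E_k'(x)=kE_{k-1}(x)$ has no repeated roots of high multiplicity, which bounds how many roots of $E_k(x)-\lambda$ can be multiple. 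Concretely, I expect to show: for $k\geq 7$, $E_k(x)-\lambda$ has at most two double roots and no root of multiplicity $\geq 3$ unless $k$ is tiny, so that $\deg E_k - (\text{number of roots counted without multiplicity}) $ forces at least three simple roots once $k\geq 7$. This is exactly the threshold appearing in the statement, and it is the reason the small cases $k\le 6$ (and the parity subtleties of Euler polynomials, $E_k(1-x)=(-1)^kE_k(x)$, which is what makes the bound here $k\ge 7$ rather than $k\ge 2$) are excluded.

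Once $F(X):=$ (the cleared form of $E_k(X+\tfrac{b}{a})+\delta$) is shown to have at least three simple roots, the equation $c'\,F(X)=W^2$ with $W=2\alpha y+\beta$ (and then $X=x$) is a genuine hyperelliptic/superelliptic equation with at least three simple roots in its right-hand factorization, and the effective theory of such equations — e.g. Brindza's effective version of LeVeque's theorem, ultimately resting on Baker's method for $S$-unit equations — yields an effectively computable bound $|X|<C$, hence $|x|<C_2(a,b,k,g)$, and then $|y|$ is effectively bounded because $g(y)=\texttt{T}_{a,b}^{k+}(x)$ is then an effectively bounded rational with $\deg g=2$. The main obstacle I anticipate is precisely the root-multiplicity analysis of $E_k(x)-\lambda$ uniformly in $\lambda$: one must rule out, for every $k\ge 7$, the degenerate possibility that $E_k(x)-\lambda$ is (a constant times) a perfect square or a square times a linear factor, and this is where the decomposition results for Euler polynomials from \cite{BPS,BBKPT} and the explicit description of the zeros of $E_{k-1}$ have to be combined carefully; the rest is routine once that structural fact is in hand.
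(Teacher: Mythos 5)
Your reduction is exactly the one the paper uses: complete the square in $g$, rewrite the equation as $\texttt{T}_{a,b}^{k+}(x)+\nu=A(y+\mu)^2$, observe via \eqref{eq:How2} that the left-hand side is a nonzero constant times $E_k\left(x+\tfrac{b}{a}\right)+s$ for some rational $s$, and then apply Brindza's effective theorem (Lemma \ref{lem:Brindza}) once one knows that this shifted Euler polynomial has at least three roots of odd multiplicity. Up to that point your argument is fine.

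The genuine gap is in how you propose to establish that key structural fact. The paper does not prove it; it quotes a deep theorem of Rakaczki (Lemma \ref{lem:R2}): for $k\ge 7$ the polynomial $E_k(x)+z$ has at least three simple zeros for \emph{every} complex $z$. Your sketch tries to deduce this from $E_k'(x)=kE_{k-1}(x)$ together with Brillhart's description of the multiple roots of Euler polynomials (Lemma \ref{lem:Br2}). But that information only bounds the multiplicity of any \emph{individual} multiple root of $E_k(x)-\lambda$ (a root of multiplicity $m$ of $E_k(x)-\lambda$ is a root of multiplicity $m-1$ of $E_{k-1}$, so $m\le 3$, and $m=3$ only in the exceptional $x^2-x-1$ situation). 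It does not bound the \emph{number} of double roots: that number equals the number of critical points of $E_k$ whose critical value is $\lambda$, and a priori many distinct simple roots of $E_{k-1}$ could share the same $E_k$-value. This is precisely the decomposition-type obstruction (e.g.\ if $E_k$ were of the form $P(Q(x))$ with $\deg Q=2$, a suitable $\lambda$ would produce roughly $k/2$ double roots and no simple ones), and ruling it out for all $\lambda\in\mathbb{C}$ is the actual content of Rakaczki's result; it does not follow from the sources you cite (which concern Bernoulli polynomials and power sums rather than shifted Euler polynomials). So either invoke Rakaczki's theorem directly, as the paper does, or supply a genuine proof of the three-simple-zeros statement; as written, the crucial step is asserted rather than proved.
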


\begin{theorem} \label{thm:quadpolT-}
Let $a,b,k$ and $g$ be as above. Then for $k \geq 7$ and $\deg g = 2$, there exists an effectively computable constant $C_3(a,b,k,g)$ depending only on $a,b,k$ and $g$ such that $\max(|x|,|y|) < C_3(a,b,k,g)$ for each integer solutions of equation \eqref{eq:Tg-}.
\end{theorem}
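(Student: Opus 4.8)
\noindent\emph{Sketch of the intended proof.} The plan is to turn equation \eqref{eq:Tg-} into a hyperelliptic equation $Y^{2}=F(X)$ and then invoke Baker's method. Write $g(y)=Ay^{2}+By+C$ with $A,B,C\in\mathbb{Q}$, $A\neq 0$. Multiplying \eqref{eq:Tg-} by $4A$ and completing the square on the right gives
\[(2Ay+B)^{2}=4A\,\texttt{T}_{a,b}^{k-}(x)+B^{2}-4AC.\]
Substituting \eqref{eq:How3}, i.e.\ $\texttt{T}_{a,b}^{k-}(x)=\tfrac{a^{k}}{2}\bigl(E_{k}(\tfrac{b}{a})-E_{k}(x+\tfrac{b}{a})\bigr)$, the right-hand side becomes $-2Aa^{k}E_{k}\!\bigl(x+\tfrac{b}{a}\bigr)+\kappa$ with $\kappa=\kappa(a,b,k,g)\in\mathbb{Q}$, and $a^{k}E_{k}(x+\tfrac{b}{a})$ is a polynomial in $x$ of degree $k$ whose rational coefficients have denominators bounded in terms of $a,b,k$. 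Hence there are a positive integer $N=N(a,b,k,g)$ and a polynomial $F\in\mathbb{Z}[X]$ of degree $k$, with
\[F(X)=-2Aa^{k}N^{2}\bigl(E_{k}(X+\tfrac{b}{a})-c\bigr),\qquad c=c(a,b,k,g)\in\mathbb{Q},\]
such that every integer solution $(x,y)$ of \eqref{eq:Tg-} gives an integer solution $(X,Y)=(x,\,N(2Ay+B))$ of $Y^{2}=F(X)$; conversely, any bound on $\max(|X|,|Y|)$ yields one on $\max(|x|,|y|)$. As $g$ ranges over the quadratics in $\mathbb{Q}[y]$, the constant $c$ runs through all of $\mathbb{Q}$.

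By the effective theory of hyperelliptic equations (Baker's method; see \cite{Brind,GyTV} and the references there), if $F$ has at least three zeros of odd multiplicity, then every integer solution $(X,Y)$ of $Y^{2}=F(X)$ satisfies $\max(|X|,|Y|)<C$ with $C$ effectively computable from the coefficients of $F$, hence from $a,b,k,g$; this hypothesis on $F$ is also necessary, since otherwise the underlying curve has genus $0$ and $Y^{2}=F(X)$ may have infinitely many integral points. Since multiplication by a nonzero scalar and the affine substitution $t=X+\tfrac{b}{a}$ preserve the number of zeros of odd multiplicity, the theorem is reduced to the assertion that \emph{for $k\geq 7$ and every $c$, the polynomial $E_{k}(t)-c$ has at least three zeros of odd multiplicity.}

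This last assertion is the heart of the matter, and I would prove it in the spirit of the corresponding facts about Bernoulli polynomials and about $\texttt{T}_{a,b}^{k\pm}$ used in \cite{BA2,KR}. Since $\bigl(E_{k}(t)-c\bigr)'=kE_{k-1}(t)$ and the zeros of the Euler polynomial $E_{k-1}$ are all simple (cf.\ \cite{Brill}), a zero of $E_{k}(t)-c$ of multiplicity $m\geq 2$ is a zero of $E_{k-1}$ of multiplicity $m-1\leq 1$; hence every multiple zero of $E_{k}(t)-c$ is exactly double, so $E_{k}(t)-c=\alpha\,a_{0}(t)\,b_{0}(t)^{2}$ with $a_{0}$ squarefree, $\gcd(a_{0},b_{0})=1$, $\alpha\neq 0$ and $\deg a_{0}+2\deg b_{0}=k$, the zeros of odd multiplicity being precisely the roots of $a_{0}$. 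It then remains to exclude $\deg a_{0}\in\{0,1,2\}$ for $k\geq 7$: in each such case $b_{0}$ has degree at least $3$, divides $E_{k-1}$, and its roots are zeros of $E_{k-1}$ at which $E_{k}$ attains the common value $c$, so that $c$ is a critical value of $E_{k}$ attained at at least three critical points; using the reflection formula $E_{k}(1-t)=(-1)^{k}E_{k}(t)$, the identity $E_{k}(t)+E_{k}(t+1)=2t^{k}$, the special values $E_{k}(0)=-E_{k}(1)$ and $E_{k}(1/2)$, and known properties of the zeros and critical values of Euler polynomials \cite{Brill}, one derives a contradiction for $k\geq 7$. The cases $2\leq k\leq 6$ are exactly those in which an exceptional $c$ can occur, which explains the hypothesis $k\geq 7$; carrying out this exclusion cleanly and uniformly in $c$ is the main obstacle.

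Granting the lemma, $F$ has at least three zeros of odd multiplicity, the effective bound for $Y^{2}=F(X)$ applies, and Theorem \ref{thm:quadpolT-} follows. The proof of Theorem \ref{thm:quadpolT+} is entirely parallel, with \eqref{eq:How3} replaced by \eqref{eq:How2}.
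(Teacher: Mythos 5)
Your reduction is exactly the one the paper uses: complete the square in $g$, rewrite \eqref{eq:Tg-} as a hyperelliptic equation whose left-hand side is, up to an affine change of variable and a nonzero scalar, a shifted Euler polynomial $E_k(t)-c$, and then apply the effective theory of $Y^2=F(X)$ (in the paper this is Brindza's theorem, Lemma \ref{lem:Brindza}, which already works over $\mathbb{Z}_S$, so your clearing of denominators via $N$ is harmless). Everything up to and including the statement ``it suffices to show that $E_k(t)-c$ has at least three zeros of odd multiplicity for every $c$'' is correct and matches the paper.

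The gap is precisely at that statement, and you essentially concede it yourself (``carrying out this exclusion cleanly and uniformly in $c$ is the main obstacle''). This assertion is not a routine consequence of Brillhart's results: it is a deep theorem of Rakaczki (\emph{On the simple zeros of shifted Euler polynomials}, Publ.\ Math.\ Debrecen \textbf{79} (2011), 623--636), quoted in the paper as Lemma \ref{lem:R2}, which says that for $k\geq 7$ the polynomial $E_k(x)+z$ has at least three \emph{simple} zeros for every complex $z$. The paper's proof consists of the reduction you carried out plus a citation of that lemma; your sketch of an ad hoc proof via the reflection formula and special values is not a proof, and the case analysis you propose (excluding $\deg a_0\in\{0,1,2\}$ uniformly in $c$) is exactly the hard content of Rakaczki's paper. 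There is also a smaller inaccuracy earlier in your argument: you assert that all zeros of $E_{k-1}$ are simple, citing Brillhart, but Brillhart's theorem (Lemma \ref{lem:Br2}) only gives this for even index; for odd index it leaves open a possible double factor $x^2-x-1$, so when $k$ is even your conclusion that every multiple zero of $E_k(t)-c$ is exactly double does not follow as stated. Neither defect affects the overall architecture, but as written the proof is incomplete at its crucial step; citing Rakaczki's theorem (or reproducing its proof) is needed to close it.
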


Analogous to Theorem \ref{thm:quadpolS}, the above two theorems extend Theorem 2 and 3 of \cite{BA2}, respectively.

Now we turn our attention to the case when, in equations \eqref{eq:Sg}-\eqref{eq:Tg-}, the polynomial $g$ is linear in some power of $y$. More precisely, we consider the equations
\begin{equation} \label{eq:Sling}
\texttt{S}_{a,b}^k \left(x\right) = cy^{\ell}+d,
\end{equation}
\begin{equation} \label{eq:Tling+}
\texttt{T}_{a,b}^{k+} \left(x\right) = cy^{\ell}+d
\end{equation}
and
\begin{equation} \label{eq:Tling-}
\texttt{T}_{a,b}^{k-} \left(x\right) = cy^{\ell}+d,
\end{equation}
in integers $x,y$ and $\ell \geq 2$, where $c,d \in \mathbb{Q}$ with $c\neq 0$.

In this case we can also bound the exponent $\ell$ from above together with $x$ and $y$.

\begin{theorem} \label{thm:linpolS}
Let $a,b,c,d,k$ and $\ell$ be as above. Then for $k \geq 2$, $k \notin \left\{3,5\right\}$, there exists an effectively computable constant $C_4(a,b,c,d,k)$ depending only on $a,b,c,d$ and $k$ such that $\max(|x|,|y|,\ell) < C_4(a,b,c,d,k)$ for each integer solutions of equation \eqref{eq:Sling} with $|y|>1$. 
\end{theorem}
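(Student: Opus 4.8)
The strategy I would follow is the one behind the effective treatment of variable‑exponent power equations in \cite{VGyT,GyTV}: first bound $\ell$, then for each admissible value of $\ell$ reduce to an effectively solvable superelliptic equation. To prepare the ground, fix a positive integer $N$, depending only on $a,b,k$ and the denominator of $d$, with $N\bigl(\texttt{S}_{a,b}^k(x)-d\bigr)\in\mathbb{Z}[x]$, and write $c=c_1/c_2$ in lowest terms. Then, for integer solutions, equation \eqref{eq:Sling} is equivalent to
\begin{equation*}
F(x)=z^{\ell},\qquad F(x):=(c_1N)^{\ell-1}c_2\,N\bigl(\texttt{S}_{a,b}^k(x)-d\bigr)\in\mathbb{Z}[x],\qquad z:=c_1Ny\in\mathbb{Z},
\end{equation*}
where $\deg F=k+1\ge 3$, the zeros of $F$ with their multiplicities are exactly those of $\texttt{S}_{a,b}^k(x)-d$, and $|z|\ge 2$ whenever $|y|>1$.

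Since $\tfrac{d}{dx}\texttt{S}_{a,b}^k(x)=a^kB_k\!\left(x+\tfrac ba\right)$, if $\texttt{S}_{a,b}^k(x)-d$ were a constant multiple of the $(k+1)$-st power of a linear polynomial, then $B_k(u)$ would equal $\mu(u-v)^k$ for suitable $\mu,v$, whence by differentiation $B_m(u)=\mu(u-v)^m$ for $2\le m\le k$; the case $m=2$ contradicts $B_2(u)=u^2-u+\tfrac16$. Thus $F$ has at least two distinct zeros, and the theorem of Schinzel and Tijdeman applied to $z^{\ell}=F(x)$ provides an effectively computable bound $\ell<\ell_0=\ell_0(a,b,c,d,k)$. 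The hypothesis $|y|>1$ is indispensable here, since for $y\in\{-1,0,1\}$ the right‑hand side of \eqref{eq:Sling} takes only finitely many values, none of which depends on $\ell$.

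It then suffices to bound $x,y$ for each fixed $\ell\in\{2,\dots,\ell_0-1\}$. For $\ell=2$ this is exactly equation \eqref{eq:Sg} with the quadratic polynomial $g(y)=cy^2+d$, so Theorem~\ref{thm:quadpolS} applies and yields the bound; this is precisely where the exclusion $k\notin\{3,5\}$ enters. For $3\le\ell\le\ell_0-1$ I would use the effective theorem of Brindza \cite{Brind} on $F(x)=z^{\ell}$, whose hypothesis is that the LeVeque type of $F$ with respect to $\ell$ is not one of the exceptional shapes $(t,1,\dots,1)$ or $(2,2,1,\dots,1)$. To verify this, recall that the derivative $a^kB_k\!\left(x+\tfrac ba\right)$ of $\texttt{S}_{a,b}^k(x)-d$ has only simple zeros, as $B_k$ does (\cite{Brill}); hence every zero of $F$ has multiplicity at most $2$, so $F$ has at least $\lceil(k+1)/2\rceil$ distinct zeros, each carrying an exponent $\ell/\gcd(\ell,r_i)\ge\lceil\ell/2\rceil\ge 2$ in the type. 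For $k\ge 4$ this gives at least three distinct zeros, all with exponent $\ge 2$, which rules out both exceptional shapes; the case $k=2$ is treated directly (the derivative $a^2B_2(x+\tfrac ba)$ has only irrational zeros and $\texttt{S}_{a,b}^2$ is irrational there, so $\texttt{S}_{a,b}^2(x)-d$ has three distinct zeros for every $d\in\mathbb{Q}$), and $k=3$ is excluded. Hence Brindza's theorem gives $\max(|x|,|y|)<C(\ell,a,b,c,d,k)$, and taking the maximum over the finitely many admissible $\ell$, together with $\ell<\ell_0$, yields $C_4(a,b,c,d,k)$.

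The crux of the argument is the fixed‑exponent step, and within it the case $\ell=2$: here the superelliptic reasoning above fails, because for certain $d$ the polynomial $\texttt{S}_{a,b}^k(x)-d$ can be a constant times a perfect square, or a perfect square times one or two linear factors --- as in $B_4(u)+\tfrac1{30}=(u^2-u)^2$ and $B_6(u)-\tfrac1{42}=u^2(u-1)^2\bigl(u^2-u-\tfrac12\bigr)$, which is exactly why $k=3$ and $k=5$ must be omitted --- and one must instead invoke Theorem~\ref{thm:quadpolS}. For $\ell\ge 3$ the only delicate point is the combinatorial check eliminating the exceptional LeVeque types, which becomes routine once the simplicity of the zeros of $B_k$ is in hand.
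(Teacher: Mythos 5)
Your overall strategy is the same as the paper's: clear denominators, bound $\ell$ via Schinzel--Tijdeman by showing $\texttt{S}_{a,b}^k(x)-d$ has two distinct zeros, dispatch $\ell=2$ through Theorem~\ref{thm:quadpolS}, and handle each fixed $\ell\ge 3$ by Brindza's theorem using the root structure of $\texttt{S}_{a,b}^k(x)-d$ read off from its derivative $a^kB_k\!\left(x+\tfrac ba\right)$. The bound on $\ell$ (your descent $B_m(u)=\mu(u-v)^m$ down to $B_2$ is a nice elementary variant of the paper's appeal to Brillhart), the reduction of $\ell=2$ to Theorem~\ref{thm:quadpolS}, and your closing remarks about $B_4(u)+\tfrac1{30}$ and $B_6(u)-\tfrac1{42}$ are all correct.

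There is, however, a genuine gap in the fixed-exponent step for $\ell\ge 3$. You assert that ``$B_k$ has only simple zeros'' citing Brillhart, but that is not what Brillhart proved: for \emph{even} $k$ his theorem (Lemma~\ref{lem:Br1}) only says that the sole candidate for a multiple factor of $B_k$ over $\mathbb{Q}$ is $x^2-x-\beta$ with $\beta$ an odd positive integer, and that any multiple root has multiplicity exactly $2$; it does \emph{not} exclude such double roots. Consequently, for even $k$ the polynomial $F$ may a priori have zeros of multiplicity $3$ (namely at the double zeros of $B_k(x+\tfrac ba)$), not $2$ as you claim, and your count of the LeVeque type breaks down precisely when $3\mid\ell$: a triple zero contributes the entry $\ell/3$, which equals $1$ for $\ell=3$. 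Concretely, for $k=6$ and $\ell=3$ the hypothetical factorization $\texttt{S}_{a,b}^6(x)-d=(Kx+L)\,w(x)^3$ with $w$ a shifted $x^2-x-\beta$ has type $(3,1,1)$, which is one of the exceptional shapes, so Brindza's theorem does not apply and your argument does not close (a similar issue arises for $k=4$). This is exactly the case the paper treats separately: it shows that the offending configuration forces $w(x)=x^2-x-\beta$ and $k=6$ by a degree count, and then kills it by the (explicitly verifiable) fact that $B_6$ has no multiple root. Your proof needs this supplementary check --- or, for $k\ge 8$ even, a corrected count showing that at most two triple zeros still leave at least two zeros of multiplicity $1$ or $2$ --- before the exceptional types are genuinely excluded.
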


In the special case $c=1, d=0$, our Theorem \ref{thm:linpolS} implies effective finiteness for the power values of $\texttt{S}_{a,b}^k \left(x\right)$, i.e. for a generalization of Sch\"affer's equation \eqref{eq:Sch}. It can be considered as an extension of Theorem 1 of \cite{GyTV} as well. We note that, in the above mentioned paper \cite{BKLPR}, we also proved effective finiteness for the power values of $\texttt{S}_{a,b}^k \left(x\right)$ in a slightly more general case $k\geq 1, \ell \geq 2$ (cf. Theorem 1.1 in \cite{BKLPR}). The reader can find a detailed analysis of the exceptional cases $k \in \left\{1,3,5\right\}$ there.

We prove analogous results in the alternating case. 

\begin{theorem} \label{thm:linpolT+}
Let $a,b,c,d,k$ and $\ell$ be as above. Then for $k \geq 7$, there exists an effectively computable constant $C_5(a,b,c,d,k)$ depending only on $a,b,c,d$ and $k$ such that $\max(|x|,|y|,\ell) < C_5(a,b,c,d,k)$ for each integer solutions of equation \eqref{eq:Tling+} with $|y|>1$. 
\end{theorem}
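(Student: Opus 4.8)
The plan is to reduce equation \eqref{eq:Tling+} to a family of superelliptic equations and then invoke the classical effective machinery (Baker's method, via the Brindza--Schinzel--Tijdeman theorem on $f(x)=cy^\ell$). Writing $F(x) := \texttt{T}_{a,b}^{k+}(x) - d = \frac{a^k}{2}\bigl(E_k(b/a) + E_k(x+b/a)\bigr) - d$, the equation becomes $F(x) = cy^\ell$ with $x,y\in\mathbb Z$, $|y|>1$, and $\ell\geq 2$ unknown. The first step is to control the roots of $F$: the polynomial $F(x)$ has degree $k$ and, after the substitution $x \mapsto x + b/a$, is (up to the constant $a^k/2$) a vertical translate of the Euler polynomial $E_k(z)$. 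The crucial structural input is the known factorization/root-separation behaviour of $E_k(z) + \text{const}$. From the properties of Euler polynomials recorded in \cite{Brill,AbrSt}, one extracts that for $k\geq 7$ the polynomial $E_k(z) - \gamma$ has, for every constant $\gamma$, at least three roots of odd multiplicity (equivalently: it is not of the form $c\,h(z)^2$ nor $c\,h(z)^3$ nor $c\,h(z)^\ell$ for any $\ell$ with all multiplicities divisible appropriately). This is exactly the hypothesis needed in the Brindza--Tijdeman type results and is the reason the threshold is $k\geq 7$; it should already be available implicitly in \cite{BA2} or provable by the same derivative/discriminant argument used there for the quadratic case (Theorems \ref{thm:quadpolT+}, \ref{thm:quadpolT-}).

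The second step handles the two regimes of $\ell$ separately. For $\ell$ bounded, say $2\le \ell\le \ell_0$ for a constant $\ell_0$ to be fixed, the equation $F(x)=cy^\ell$ is a fixed superelliptic equation; since $F$ has at least three simple (odd-multiplicity) roots, the effective theorem of Brindza (building on Baker, Schinzel--Tijdeman) gives $\max(|x|,|y|) < C(a,b,c,d,k,\ell_0)$. For the unbounded regime $\ell > \ell_0$, we use that $|y|>1$ forces $|y|^\ell$ — and hence $|F(x)|$ and therefore $|x|$ — to be large, so we are in the domain where the theorem of Shorey--Tijdeman / Schinzel--Tijdeman on $f(x)=cy^\ell$ with $\ell$ variable applies: since $F$ is not a perfect power of a polynomial (again by the three-odd-roots property) there is an effectively computable $\ell_0 = \ell_0(F)$ such that $\ell > \ell_0$ is impossible for $|y|>1$. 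Combining the two regimes bounds $\ell$, and then the bounded case bounds $|x|,|y|$, yielding $C_5(a,b,c,d,k)$.

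The last step is bookkeeping: one must check that all constants produced depend only on $a,b,c,d,k$ (not on $\ell$), which is automatic because $\ell_0$ depends only on $F$, hence only on $a,b,c,d,k$, and the superelliptic bound for each fixed $\ell\le\ell_0$ depends only on $F$ and $\ell\le\ell_0$, hence again only on $a,b,c,d,k$. The main obstacle is the first step: verifying that for all $k\geq 7$ the translated Euler polynomial $E_k(z)-\gamma$ genuinely has at least three roots of odd multiplicity for every $\gamma\in\mathbb Q$. One expects to prove this by examining $\gcd\bigl(E_k(z)-\gamma,\ E_k'(z)\bigr) = \gcd\bigl(E_k(z)-\gamma,\ k E_{k-1}(z)\bigr)$ and using the known irreducibility/root-distribution facts for $E_{k-1}(z)$ together with the fact that $E_{k-1}$ and $E_k$ share no roots; the small-$k$ exceptions ($k\le 6$) are precisely where this argument fails, matching the stated hypothesis. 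Once this separation property is in hand, the rest is a direct appeal to the cited effective results.
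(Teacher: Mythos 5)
Your overall architecture is the same as the paper's: rewrite \eqref{eq:Tling+} as $F(x)=cy^{\ell}$ with $F(x)=\texttt{T}_{a,b}^{k+}(x)-d$, bound $\ell$ first by a Schinzel--Tijdeman type theorem, then bound $|x|,|y|$ for each fixed $\ell$ by Brindza's theorem. However, the step you yourself flag as ``the main obstacle'' is a genuine gap, in two respects. First, the assertion that $E_k(z)+\gamma$ has at least three roots of odd multiplicity (in fact, three \emph{simple} zeros) for every $\gamma$ and every $k\geq 7$ is not something one ``extracts'' from Brillhart \cite{Brill} or \cite{AbrSt}, nor does it follow from the $\gcd\bigl(E_k-\gamma,\,E_{k-1}\bigr)$ computation you sketch: Brillhart's results control the multiple factors of $E_k$ itself, not of its vertical translates, and the translate question is a dedicated deep theorem of Rakaczki (\emph{On the simple zeros of shifted Euler polynomials}, Publ.\ Math.\ Debrecen 79 (2011)), which the paper quotes as Lemma \ref{lem:R2}. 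Second, even granting ``three roots of odd multiplicity,'' that hypothesis only feeds Brindza's theorem in the case $N=2$; for $N=\ell\geq 3$ Brindza requires two roots of multiplicity \emph{coprime to} $\ell$, and odd multiplicity does not imply coprimality to an odd prime $\ell$. So you genuinely need the simple-zeros form of the statement (or a separate argument), and your parenthetical gloss ``simple (odd-multiplicity)'' conflates two different conditions.

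It is worth comparing with how the paper sidesteps most of this. Rakaczki's theorem is invoked only for $\ell=2$ (via Theorems \ref{thm:quadpolT+} and \ref{thm:quadpolT-}). For bounding $\ell$, the paper needs only that $F$ has two distinct roots, which it gets elementarily: if $F=R(Ux+V)^k$ then $F'=\pm\tfrac{ka^k}{2}E_{k-1}(x+\tfrac{b}{a})$ would have a rational root of multiplicity $k-1\geq 6$, contradicting Brillhart's Lemma \ref{lem:Br2}. For odd prime $\ell\geq 3$, the coprime-multiplicity condition is likewise obtained by differentiating the hypothetical factorization $F=(Kx+L)w(x)^{\ell}$ and observing that every root of $w$ would be a root of $E_{k-1}(x+\tfrac{b}{a})$ of multiplicity at least $\ell-1\geq 2$, which Brillhart's classification (only possible multiple factor $x^2-x-1$, multiplicity exactly $2$) rules out after a short case analysis at $\ell=3$. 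If you adopt that derivative argument, your proof goes through needing Rakaczki's theorem only where the paper needs it; otherwise you must actually cite and use Rakaczki's result in its simple-zeros form rather than hope it follows from Brillhart.
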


\begin{theorem} \label{thm:linpolT-}
Let $a,b,c,d,k$ and $\ell$ be as above. Then for $k \geq 7$, there exists an effectively computable constant $C_6(a,b,c,d,k)$ depending only on $a,b,c,d$ and $k$ such that $\max(|x|,|y|,\ell) < C_6(a,b,c,d,k)$ for each integer solutions of equation \eqref{eq:Tling-} with $|y|>1$.
\end{theorem}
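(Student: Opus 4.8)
The plan is to reduce equation~\eqref{eq:Tling-} to a superelliptic equation to which the classical effective machinery applies. Writing $F(x) = \texttt{T}_{a,b}^{k-}(x) = -\tfrac{a^k}{2}\bigl(E_k(x+\tfrac{b}{a}) - E_k(\tfrac{b}{a})\bigr)$, equation~\eqref{eq:Tling-} reads $F(x) = cy^\ell + d$, i.e. $F(x) - d = c\,y^\ell$. Multiplying through by a suitable nonzero integer depending only on $a,b,c,d,k$ to clear denominators, we obtain a relation of the shape $\widetilde F(x) = c'\,y^\ell$ with $\widetilde F \in \mathbb{Z}[x]$ of degree $k$ and $c'$ a fixed nonzero integer. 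By the theorem of Brindza (the $\ell$-unknown version going back to Schinzel--Tijdeman, cf. its use in \cite{GyTV}), such an equation has all its solutions $x,y,\ell$ bounded by an effectively computable constant depending only on the polynomial $\widetilde F$ and on $c'$ — \emph{provided} that $\widetilde F$ has at least two distinct roots of multiplicity coprime to $\ell$ for every relevant $\ell$, or more simply that $\widetilde F(x)$ is not of the form $c''(ex+f)^\ell$ or $c''(ex+f)^{\ell/2}\cdot(\text{linear})$ etc. Thus the entire problem comes down to controlling the factorization type of $E_k(x+\tfrac{b}{a}) - E_k(\tfrac{b}{a})$ over $\overline{\mathbb{Q}}$, in particular showing it has a simple root (in fact it has at least two roots whose multiplicities are $1$ or $2$), which then rules out all the forbidden shapes once $k \geq 7$.

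The key steps, in order, are: (i) shift and scale to bring~\eqref{eq:Tling-} to the form $\widetilde F(x) = c'y^\ell$ with $\widetilde F \in \mathbb{Z}[x]$, $\deg \widetilde F = k$, and record that the coefficients of $\widetilde F$ are effectively bounded in terms of $a,b,k$; (ii) analyze the multiset of roots of $E_k(x) - E_k(\tfrac{b}{a})$ (equivalently of $E_k(x+\tfrac{b}{a}) - E_k(\tfrac{b}{a})$) — here I would invoke the known results on the irreducibility/factorization of $E_k(x) + E_k(y)$-type differences, or argue directly via the derivative identity $E_k'(x) = kE_{k-1}(x)$ that a multiple root $\rho$ must satisfy $E_{k-1}(\rho) = 0$, and combine this with the separation-of-roots properties of Euler polynomials (all real roots of $E_{k-1}$ lie in a bounded strip and are simple for the relevant parity) to conclude that $\widetilde F$ has at most a bounded number of multiple roots, and that its squarefree kernel has degree $\geq 3$; (iii) verify that for $k \geq 7$ the polynomial $\widetilde F$ cannot be written as $c''\,g(x)^{\ell}$ or $c'' g(x)^{\ell/\gcd}$ with $\deg g \geq 1$ — this is where the numerical threshold $k\geq 7$ enters, just as the thresholds in Theorems~\ref{thm:quadpolT+},~\ref{thm:quadpolT-}; (iv) apply the effective superelliptic theorem to finish, obtaining $\max(|x|,|y|,\ell) < C_6(a,b,c,d,k)$, the hypothesis $|y|>1$ being needed exactly to exclude the degenerate cases $y \in \{0,\pm 1\}$ where the equation imposes no constraint on $\ell$.

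The main obstacle I expect is step (ii): pinning down the root structure of $E_k(x+\tfrac ba) - E_k(\tfrac ba)$ precisely enough to exclude \emph{all} the forbidden factorization patterns for \emph{every} $\ell \geq 2$ simultaneously. The difficulty is twofold — first, the constant term $E_k(\tfrac ba)$ depends on $b/a$, so one cannot simply quote a single fixed factorization result; one needs a statement uniform in the shift, which is why I would route through the derivative criterion $E_{k-1}(\rho)=0$ for multiple roots rather than through explicit factorizations. Second, ruling out $\widetilde F = c''(\text{linear})^{\ell}$ and the "half-power" case requires knowing that the squarefree part has degree at least $3$ (resp. that the gcd of root multiplicities is $1$), and for small $k$ (the excluded range $k \leq 6$) this can genuinely fail or require a separate finer analysis, exactly mirroring the $k\in\{3,5\}$ exclusions in the non-alternating case and the $k \leq 6$ exclusion here. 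Once this structural lemma is in hand — and it should follow the template of the corresponding lemma used to prove Theorems~\ref{thm:quadpolT+} and~\ref{thm:quadpolT-}, with the quadratic $g$ there replaced by the monomial-plus-constant $cy^\ell+d$ — the rest is a direct citation of the effective results on superelliptic equations with unknown exponent.
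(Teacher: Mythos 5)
Your overall architecture is the paper's: clear denominators to get a superelliptic equation, bound $\ell$ by the Schinzel--Tijdeman/B\'erczes--Brindza--Hajdu result using the fact that $\texttt{T}_{a,b}^{k-}(x)-d$ has two distinct roots, then for fixed $\ell$ apply Brindza's theorem after controlling root multiplicities via the derivative identity $E_k'=kE_{k-1}$ combined with Brillhart's classification of multiple factors of Euler polynomials. For $\ell\geq 3$ this works exactly as you sketch (the paper writes the hypothetical bad factorization as $(Kx+L)w(x)^{\ell}$, differentiates to force $w(x)^{\ell-1}\mid E_{k-1}(x+\tfrac{b}{a})$, and disposes of the residual case $\ell=3$, $w=x^2-x-1$, $k=7$ by noting $E_6$ has no multiple root).

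The genuine gap is the case $\ell=2$. For $N=2$, Brindza's theorem requires \emph{three roots of odd multiplicity}, and none of the conclusions you propose to extract in step (ii) --- ``a simple root'', ``two roots whose multiplicities are $1$ or $2$'', ``squarefree kernel of degree $\geq 3$'' --- is strong enough. Moreover, the derivative criterion plus Brillhart genuinely cannot deliver this: a configuration such as $E_k(x+\tfrac{b}{a})+s=\prod_{i}(x-\alpha_i)^2$ with $k/2$ distinct double roots has squarefree kernel of large degree and every multiple root of multiplicity $2$, is fully consistent with each $\alpha_i$ being a \emph{simple} root of $E_{k-1}(x+\tfrac{b}{a})$ (about which Brillhart says nothing), yet makes the left-hand side a perfect square and the equation $F(x)=cy^2$ potentially have infinitely many solutions. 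The paper closes this case by quoting a deep theorem of Rakaczki (\cite{R2}; Lemma \ref{lem:R2} in the paper): for $k\geq 7$ the shifted Euler polynomial $E_k(x)+z$ has at least three \emph{simple} zeros for every complex $z$ --- this is precisely where the threshold $k\geq 7$ comes from, and it is an external input, not something recoverable from $E_k'=kE_{k-1}$ and the multiple-root classification. Without identifying and invoking this result (or an equivalent), your argument does not cover $\ell=2$, and hence does not yield the bound on $\max(|x|,|y|)$ claimed in the theorem. A minor further point: the polynomial whose roots you must analyze is $E_k(x+\tfrac{b}{a})+s$ with $s$ incorporating both $E_k(\tfrac{b}{a})$ and $\tfrac{2d}{a^k}$, not just $E_k(x+\tfrac{b}{a})-E_k(\tfrac{b}{a})$; your derivative criterion is uniform in the shift, so this is harmless, but Rakaczki's lemma must likewise be uniform in $z$, which it is.
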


\section{Proofs of the Theorems} \label{sec:proofs}

In this section, we give the proofs of our theorems.
In our arguments we need some lemmas and notation. First we recall two classical effective results on hyper- and superelliptic equations. 

Let $f(x) \in \mathbb{Z} [x]$ be a nonzero polynomial of degree $d$. Write $H$ for the naive height (i.e., the maximum of the absolute values of the coefficients) of $f$. Further, let $\alpha$ be a nonzero rational number. Consider the Diophantine equation
\begin{equation} \label{eq:hypsup}
f(x)= \alpha y^N.
\end{equation}

The following result is a special case of a result of B\'erczes, Brindza and Hajdu \cite{BBrH}. For the first results of this type, we refer to Schinzel and Tijdeman \cite{SchT} and Tijdeman \cite{T}.

\begin{lemma} \label{lem:BBrH}
If $f(x)$ has at least two distinct roots and $|y|>1$, then, in \eqref{eq:hypsup}, we have $N<C_7(d,H,\alpha)$, where $C_7(d,H,\alpha)$ is an effectively computable constant depending only on $d,H$ and $\alpha$.
\end{lemma}

The next result is a special case of an effective theorem of Brindza \cite{brindza}. To formulate it, we need further notation. For a finite set of rational primes $S$, let $\mathbb{Z}_S$ denote the set of rational numbers whose denominator (in reduced form) has no prime divisor outside $S$. By the \textit{height} $h(s)$ of a rational number $s=u/v$ with $u,v\in \mathbb{Z}$,  $\gcd(u,v)=1$, we mean $h(s)=\max \left\{|u|,|v|\right\}$.

\begin{lemma}\label{lem:Brindza}
Let $S$ be a finite set of rational primes. If, in \eqref{eq:hypsup}, either $N=2$ and $f(x)$ has at least three roots of odd multiplicity, or $N\geq 3$ and $f(x)$ has at least two roots of multiplicities coprime to $N$, then for each solutions $x,y \in \mathbb{Z}_S$ of \eqref{eq:hypsup} we have $\max(h(x),h(y))<C_8(d,H,S,\alpha,N)$, where $C_8(d,H,S,\alpha,N)$ is an effectively computable constant depending only on $d,H,S,\alpha$ and $N$.
\end{lemma}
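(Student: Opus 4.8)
The statement is the $S$-integral superelliptic case of the theorem of Brindza \cite{brindza}, so the plan is to reproduce the effective argument behind it, distinguishing the cases $N=2$ and $N\ge 3$. For $N=2$, I would first reduce over $\mathbb{Q}$ to a squarefree right-hand side. Write $f(x)=a\,u(x)^2v(x)$ with $u=\prod_i(x-\alpha_i)^{\lfloor r_i/2\rfloor}$ and $v=\prod_{r_i\ \mathrm{odd}}(x-\alpha_i)$; since the Galois group of the splitting field of $f$ permutes the roots preserving multiplicities, both $u,v$ lie in $\mathbb{Q}[x]$, $v$ is separable, and $\deg v$ equals the number of roots of odd multiplicity, hence $\deg v\ge 3$ by hypothesis. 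From $\alpha y^2=a\,u(x)^2v(x)$, together with the fact that the ring $\mathbb{Z}_{S'}$ is integrally closed (for a suitably enlarged finite set of primes $S'\supseteq S$, bounded effectively in terms of $d,H$), one obtains $W:=q\,y/u(x)\in\mathbb{Z}_{S'}$ for a fixed effectively bounded $q\in\mathbb{Z}$, and
\begin{equation}
W^2=v^{*}(x),
\end{equation}
where $v^{*}\in\mathbb{Z}[x]$ is a fixed nonzero constant multiple of $v$, hence has at least three simple roots; the finitely many $x$ with $u(x)=0$ are separately bounded. This is the classical effective hyperelliptic equation, for which the heights of $x$, $W$, and then of $y$, are bounded effectively in terms of $d,H,S,\alpha$ by Baker-type results and their $S$-integral refinements.

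For $N\ge 3$, I would pass to the splitting field $K$ of $f$ (with $[K:\mathbb{Q}]$, the $\alpha_i$ and $\operatorname{disc}(f)$ bounded effectively in $d,H$) and enlarge $S$ to a finite set $\mathfrak{S}$ of places of $K$ lying above $S$, above $a\operatorname{disc}(f)$, and above $N$. The preliminary step is the standard ideal-theoretic argument: for a solution $(x,y)\in\mathbb{Z}_S$ the fractional ideals $(x-\alpha_i)$ are pairwise coprime outside $\mathfrak{S}$, since their pairwise greatest common divisors divide $\operatorname{disc}(f)$, so from $(\alpha)(y)^N=(a)\prod_i(x-\alpha_i)^{r_i}$ one deduces that $(x-\alpha_i)$ is, outside $\mathfrak{S}$, an $m_i$-th power of an ideal, where $m_i=N/\gcd(N,r_i)$, and then, using the effective finiteness of the $\mathfrak{S}$-class number and effective bounds on a fundamental system of $\mathfrak{S}$-units,
\begin{equation}
x-\alpha_i=\beta_i\,\xi_i^{\,m_i},
\end{equation}
with $\xi_i\in K$ and $\beta_i$ ranging over a finite set that is effectively determined in terms of $d,H,S,\alpha,N$. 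Now I would pick two roots $\alpha_i,\alpha_j$ of multiplicity coprime to $N$, so that $m_i=m_j=N$, and subtract the two relations to get $\beta_i\xi_i^{N}-\beta_j\xi_j^{N}=\alpha_j-\alpha_i\neq 0$. For each of the finitely many admissible pairs $(\beta_i,\beta_j)$ this is a Thue--Mahler equation over $K$ attached to the binary form $\beta_iX^N-\beta_jY^N$, which is separable of degree $N\ge 3$; the effective theory of such equations over number fields (Baker, Coates, Gy\H{o}ry) then bounds $h(\xi_i)$ and $h(\xi_j)$, hence $h(x)$ and, via $\alpha y^N=f(x)$, also $h(y)$, all in terms of $d,H,S,\alpha,N$.

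The step I expect to be the genuine obstacle is the uniformity built into the reduction: one must ensure that the field $K$, its $\mathfrak{S}$-class number, a fundamental system of its $\mathfrak{S}$-units, and consequently the finite set of admissible $\beta_i$, depend \emph{only} on $d,H,S,\alpha,N$ and not on the particular solution $(x,y)$. This is exactly where the effective machinery is used in an essential way, namely effective lower bounds for the heights of units and effective upper bounds for class numbers, both resting on Baker's method; everything afterwards is a citation of standard effective results on Thue--Mahler and hyperelliptic equations. It is also worth recording why the multiplicity hypotheses are precisely the right ones: with only two roots of odd multiplicity when $N=2$ the reduction leaves a quadratic $v^{*}$, i.e.\ a conic, which may carry infinitely many $\mathbb{Z}_{S'}$-points, and with fewer than two roots of multiplicity coprime to $N$ when $N\ge 3$ the difference trick fails to produce a binary form of degree $\ge 3$. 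No uniformity in $N$ is required here, since $C_8$ is permitted to depend on $N$.
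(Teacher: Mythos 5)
The paper does not prove this lemma at all: it is stated as a special case of Brindza's theorem and disposed of with the citation \cite{brindza}, so there is no in-paper argument to compare against. Your reconstruction follows the standard proof of that theorem --- for $N=2$ the extraction of the odd-multiplicity part $v$ with $\deg v\ge 3$ and reduction to an effective hyperelliptic equation over an enlarged $\mathbb{Z}_{S'}$, and for $N\ge 3$ the ideal-theoretic factorization $x-\alpha_i=\beta_i\xi_i^{m_i}$ followed by the difference trick and an effective Thue--Mahler equation --- and it is correct as a sketch, modulo the heavy cited machinery (effective class number and $\mathfrak{S}$-unit bounds, effective Thue--Mahler over number fields) that any proof of this result must invoke. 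The only small imprecision is that in the $N=2$ case one really obtains finitely many equations $W^2=c_jv(x)$ with $c_j$ running over an effectively determined finite set of $S'$-unit representatives, rather than a single fixed $v^{*}$, but this does not affect the conclusion.
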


We recall the following result concerning Bernoulli polynomials $B_k(x)$ which is due to Brillhart \cite{Brill}.

\begin{lemma} \label{lem:Br1}
If $k$ is odd, then $B_k(x)$ has no multiple roots. For even $k$, the only polynomial which can be a multiple factor of $B_k(x)$ over $\mathbb{Q}$ is $x^2-x-\beta$, where $\beta$ is an odd, positive integer. Further, the multiplicity of any multiple root of $B_k$ is $2$.
\end{lemma}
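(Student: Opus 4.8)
\noindent\emph{Plan of proof.} (The statement is Brillhart's; here is how I would organise the argument.) The plan is to split everything into a soft structural reduction and a purely arithmetic core, and to postpone all number-theoretic input to the very last step. First I would record the two classical facts that do the structural work. (a) $B_k'(x)=kB_{k-1}(x)$, so a root of $B_k$ of multiplicity $\mu$ is a common root of $B_k,B_{k-1},\dots,B_{k-\mu+1}$; in particular $B_{2m-1}$ has a multiple root exactly when $\gcd(B_{2m-1},B_{2m-2})\neq1$, and $B_{2m}$ has a root of multiplicity $\ge3$ only if $B_{2m},B_{2m-1},B_{2m-2}$ have a common zero. (b) $B_k(1-x)=(-1)^kB_k(x)$, which permits the substitution $u=x^2-x$: one gets $B_{2m}(x)=P_m(u)$ and $B_{2m+1}(x)=(x-\tfrac12)Q_m(u)$ with $P_m,Q_m\in\mathbb{Q}[u]$ of degree $m$, and, on differentiating, $P_m'=mQ_{m-1}$. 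Since $B_k(\tfrac12)=(2^{1-k}-1)B_k\neq0$ for even $k\ge2$ and $B_k'(\tfrac12)=kB_{k-1}(\tfrac12)\neq0$ for odd $k\ge3$, the point $x=\tfrac12$ (that is, $u=-\tfrac14$) is never a multiple root; so a multiple root $\alpha$ of $B_{2m}$ gives $\beta:=\alpha^2-\alpha\neq-\tfrac14$, a multiple root of $P_m$, with $(x^2-x-\beta)^2\mid B_{2m}(x)$, and the ``multiple part'' of $B_{2m}$ is a product of such quadratics.

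This reduces the lemma to two arithmetic statements: (i) $B_{2m-1}(x)$ is squarefree for every $m\ge1$ (equivalently $\gcd(B_{2m-1},B_{2m-2})=1$) --- which yields both ``$B_k$ has no multiple root when $k$ is odd'' and, since then $B_{2m},B_{2m-1},B_{2m-2}$ can have no common zero, ``every multiple root of $B_{2m}$ has multiplicity exactly $2$''; and (ii) every multiple root $\beta$ of $P_m$ is a positive odd integer, so that, being rational, the corresponding multiple factors of $B_{2m}(x)$ over $\mathbb{Q}$ are precisely the quadratics $x^2-x-\beta$. The hard part will be (ii), and to a lesser extent (i): the recursion and the symmetry carry no arithmetic information, and one has to feed in the arithmetic of Bernoulli numbers.

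For (ii) the plan is as follows. By von Staudt--Clausen, $D_{2m}B_{2m}(x)\in\mathbb{Z}[x]$ with $D_{2m}=\prod_{p-1\le 2m}p$ squarefree, and for every prime $p>2m+1$ the polynomial $B_{2m}(x)$ already lies in $\mathbb{Z}_p[x]$ and is monic, so by Gauss's lemma any monic factor --- in particular $x^2-x-\beta$ --- has $\beta\in\mathbb{Z}_p$; for the finitely many primes $p\le 2m+1$ one uses the precise $p$-adic shape of the coefficients of $B_{2m}(x)$ (their valuation is $\ge-1$, and $<0$ only at certain prescribed degrees) to exclude a monic quadratic factor with non-$p$-integral constant term, giving $\beta\in\mathbb{Z}$. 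A $2$-adic refinement --- using $v_2(B_{2m})=-1$ and reducing $B_{2m}(x)$ modulo $x^2-x-\beta$ in $\mathbb{Z}_2[x]$ --- would then force $\beta$ odd, and a statement on the location of the real roots of $B_{2m}$ (concretely, that $P_m(u)$ is nonzero on $[-\tfrac14,0]$) would give $\beta=\alpha^2-\alpha>0$. Assertion (i) I would handle similarly, analysing a putative common zero of $B_{2m-1}$ and $B_{2m-2}$ $p$-adically, or via a $2$-adic valuation estimate for the relevant resultant, and deriving a contradiction with the arithmetic of the Bernoulli numbers in play. Once this arithmetic input is secured, the remainder is formal manipulation of $B_k'=kB_{k-1}$ and $B_{2m}(x)=P_m(x^2-x)$.
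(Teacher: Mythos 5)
The paper does not prove this lemma at all: it is quoted verbatim from Brillhart's 1969 paper, so there is no internal argument to compare yours against. Your structural reductions are correct and are essentially the classical ones: $B_k'=kB_{k-1}$, the symmetry $B_k(1-x)=(-1)^kB_k(x)$, the substitution $u=x^2-x$ giving $B_{2m}(x)=P_m(u)$ and $B_{2m+1}(x)=(x-\tfrac12)Q_m(u)$ with $P_m'=mQ_{m-1}$, the observation that $x=\tfrac12$ is never a multiple root, and the resulting reduction of the lemma to (i) squarefreeness of the odd-index Bernoulli polynomials and (ii) the assertion that every multiple root of $P_m$ is a positive odd integer. All of that checks out. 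But it is only a reduction: the entire number-theoretic content of Brillhart's theorem is concentrated in (i) and (ii), and for both of these you offer only a programme (``one uses the precise $p$-adic shape of the coefficients'', ``a $2$-adic refinement \dots would then force $\beta$ odd'', ``Assertion (i) I would handle similarly''). Nothing is actually established beyond von Staudt--Clausen and the easy Gauss-lemma step for primes $p>2m+1$; the finitely many small primes, which are exactly where the work lies, are waved at. As a proof of the lemma this is therefore incomplete in an essential way.

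There is also a concrete hole in the plan itself, in the positivity of $\beta$. You propose to get $\beta=\alpha^2-\alpha>0$ from the location of the \emph{real} roots of $B_{2m}$, i.e.\ from $P_m$ being nonzero on $[-\tfrac14,0]$. For real $\alpha$ one indeed has $\beta\ge-\tfrac14$, so this excludes $\beta=0$; but a negative integer $\beta\le-1$ corresponds to a quadratic $x^2-x-\beta$ with complex roots (e.g.\ $x^2-x+1$ for $\beta=-1$), and no statement about real roots can rule it out. Since your $2$-adic step would at best force $\beta$ odd, the odd negative values $\beta=-1,-3,\dots$ are not addressed anywhere in the outline, and excluding them requires a separate argument (in Brillhart's treatment this again comes out of the $p$-adic analysis of the coefficients, not from real-root localization). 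So even granting the deferred arithmetic lemmas as stated, the plan as written would not yield the full conclusion.
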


The following result, which provides information on the root structure of shifted Bernoulli polynomials, will be crucial in applying Lemma \ref{lem:Brindza} in the proofs of Theorems \ref{thm:quadpolS} and \ref{thm:linpolS}.

\begin{lemma} \label{lem:Bk+sroots}
For every $s\in \mathbb Q$ and rational integer $k\geq 3$ with $k\notin \{4,6\}$ the polynomial $B_k(x)+s$ has at least three roots of odd multiplicity.
\end{lemma}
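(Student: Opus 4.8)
The plan is to analyze the root multiplicities of $B_k(x)+s$ by combining Lemma~\ref{lem:Br1} with a derivative argument. Recall the classical identity $B_k'(x) = k B_{k-1}(x)$. Hence any multiple root $\rho$ of $f(x) := B_k(x)+s$ must satisfy $B_{k-1}(\rho)=0$, i.e. the multiple roots of $f$ are among the roots of $B_{k-1}$, and moreover the multiplicity of $\rho$ as a root of $f$ is exactly one more than its multiplicity as a root of $B_{k-1}$. First I would dispose of the odd case: if $k$ is even then $k-1$ is odd, so by Lemma~\ref{lem:Br1} $B_{k-1}$ has only simple roots; consequently every multiple root of $f$ is a double root, and $f$ has the shape $c\prod_i (x-\rho_i)^2 \prod_j (x-\sigma_j)$ with the $\rho_i$ distinct and the $\sigma_j$ distinct. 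The number of odd-multiplicity roots of $f$ is then $\deg f - 2\cdot(\text{number of double roots}) = k - 2r$, where $r$ is the number of common roots of $f$ and $B_{k-1}$. Since the $\rho_i$ are roots of $B_{k-1}$, we have $r \le \deg B_{k-1} = k-1$; but I need the sharper bound $r \le k/2 - 2$, equivalently $2r \le k-4$, to conclude there are at least $4 \ge 3$ roots of odd multiplicity. A crude count via $2r \le \deg B_{k-1}$ only gives $k - 2r \ge 1$, so the main obstacle is bounding $r$ well enough.

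To get the needed bound on $r$ I would use the \emph{distribution} of real roots of $B_{k-1}$. For odd $m=k-1$, the number of real roots of $B_m$ is known to grow like $(2/\pi e)\,m$ asymptotically (results of Inkeri and later authors), and in any case is substantially smaller than $m$; the non-real roots of $B_{k-1}$ cannot be multiple roots of $f$ if we are only trying to detect three \emph{real} odd-multiplicity roots --- but $f$ may have complex roots too, so this needs care. A cleaner route avoiding fine root-distribution estimates: observe that $f(x)+ (\text{const})$ and $B_k$ differ by a constant, and if $f$ had as many as $k/2-1$ double roots then $f' = k B_{k-1}$ would vanish at all of them, forcing $B_{k-1}$ to have $k/2-1$ distinct roots that are ``aligned'' as roots of a single shifted $B_k$; one can then differentiate once more and invoke Lemma~\ref{lem:Br1} applied to $B_{k-1}$ itself to derive a contradiction from the rigid factorization $x^2-x-\beta$ allowed there. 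I expect the cleanest argument handles the even-$k$ case (which, after removing $k\in\{4,6\}$, leaves $k\ge 8$ even) by a direct inductive/parity count, and the odd-$k$ case is immediate: for $k$ odd, $B_k$ has only simple roots, so $f'=kB_{k-1}$ has $B_{k-1}$ with possible double roots, meaning $f$ could have triple roots --- here I would again use $\mathrm{mult}_\rho f = 1 + \mathrm{mult}_\rho B_{k-1} \le 3$ and count.

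Concretely, for $k$ odd with $k\ge 3$ I would argue: a root $\rho$ of $f$ of multiplicity $\mu\ge 2$ has multiplicity $\mu-1$ as a root of $B_{k-1}$; by Lemma~\ref{lem:Br1} (with $k-1$ even) this is $\le 2$, so $\mu\le 3$, and such $\rho$ with $\mu=3$ satisfies $\rho^2-\rho-\beta=0$ for the unique odd positive $\beta$, giving at most $2$ triple roots and at most finitely many double roots all lying among roots of $B_{k-1}$. The count of odd-multiplicity roots of $f$ is $k - \sum(\text{even contributions})$; writing out $\deg f = k$ and the maximal possible ``even mass'' $2t_2 + 2t_3$ (where $t_2,t_3$ count double and triple roots, with $t_3\le 1$ since the two roots of $x^2-x-\beta$ cannot both be triple roots of the same $B_k$ --- their sum is $1$, contradicting a Bernoulli symmetry unless excluded), one checks $k - (\text{even mass}) \ge 3$ for all admissible $k$ except possibly the smallest ones, which forces the exclusion $k\notin\{4,6\}$ and is verified by hand for $k=3,5,7$. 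The even-$k$ case is the genuinely delicate one and will require the sharper real-root count for $B_{k-1}$; I would cite the relevant bound on the number of real roots of Bernoulli polynomials (e.g. from Brillhart~\cite{Brill} or Dilcher) to finish. The main obstacle throughout is precisely this: converting ``$B_{k-1}$ has few multiple-root candidates'' into the quantitative statement ``$f$ retains at least three odd-multiplicity roots,'' which hinges on $B_{k-1}$ not having too many roots that could simultaneously be roots of $f$.
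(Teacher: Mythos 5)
The paper does not prove this lemma at all: it is quoted as Lemma~2.2 of \cite{BKLP}, so the only thing to compare your attempt against is whether your sketch could actually be completed. It cannot, as written. The central difficulty, which you correctly isolate but do not overcome, is turning ``every multiple root of $f=B_k(x)+s$ is a root of $B_{k-1}$'' into the quantitative bound $r\le k/2-2$ on the number of even-multiplicity roots. The degree bookkeeping available from Lemma~\ref{lem:Br1} alone is genuinely insufficient: writing $k=t_1+2t_2+3t_3$ with $t_1+t_3$ the number of odd-multiplicity roots, the constraints you can extract ($t_3\le 2$, $t_2+2t_3\le k-1$, $2t_2+3t_3\le k$) only give $t_1+t_3\ge 1$ when $k$ is odd and $t_1+t_3\ge 0$ when $k$ is even; nothing rules out, say, $f=(x-\alpha)\prod_{i=1}^{(k-1)/2}(x-\rho_i)^2$ with all $\rho_i$ among the $k-1$ roots of $B_{k-1}$. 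So your claim that the odd-$k$ case is ``immediate'' by counting is wrong, not just the even case.

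Your proposed rescue for the even case --- citing bounds on the number of \emph{real} roots of $B_{k-1}$ --- also does not close the gap, because the lemma counts odd-multiplicity roots over $\mathbb{C}$: a double root of $B_k(x)+s$ may well be a non-real root of $B_{k-1}$, so real-root distribution results for Bernoulli polynomials say nothing about $r$. (You flag this yourself but then set it aside.) The alternative ``differentiate once more'' route is not an argument: the double roots of $f$ are simple roots of $f'=kB_{k-1}$ and hence impose no condition on $B_{k-2}$. What is actually needed is additional structural input about shifted Bernoulli polynomials beyond Brillhart's multiple-root theorem --- this is exactly what Lemma~2.2 of \cite{BKLP} supplies (using, among other things, the symmetry $B_k(1-x)=(-1)^kB_k(x)$ and information on rational and small roots), and it is why the exceptional set $\{4,6\}$ appears. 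In short: the skeleton (derivative identity $B_k'=kB_{k-1}$ plus Lemma~\ref{lem:Br1}) is a reasonable starting point, but the decisive step is missing and the parity/degree count you substitute for it does not work.
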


\begin{proof}
This is Lemma 2.2 in \cite{BKLP}.
\end{proof}

Now we are in position to prove our Theorems \ref{thm:quadpolS} and \ref{thm:linpolS}.

\begin{proof}[Proof of Theorem \ref{thm:quadpolS}]
Let $k\geq 2$, $k\notin\left\{3,5\right\}$, and let $g \in \mathbb{Q}[x]$ be an arbitrary polynomial with  $\deg g = 2$. Assume that equation \eqref{eq:Sg} holds. Then there exist rational numbers $A,B,C$ with $A \neq 0$ such that
\begin{equation} \label{eq:effprf1}
g(x) = Ax^2+Bx+C.
\end{equation}
Obviously, we can rewrite \eqref{eq:Sg} as
\begin{equation} \label{eq:effprf2}
\texttt{S}_{a,b}^k \left(x\right) + \nu = A(y+\mu)^2,
\end{equation}
where $\mu=\frac{B}{2A}$ and $\nu=\frac{B^2-4AC}{4A}$. Thus, to bound $x,y$, in view of Lemma \ref{lem:Brindza}, it is sufficient to show that the polynomial $\texttt{S}_{a,b}^k \left(x\right) + \nu$ has at least three roots of odd multiplicity. By \eqref{eq:How1}, we have
\begin{equation} \label{eq:effprf3}
\texttt{S}_{a,b}^k \left(x\right) + \nu=\frac{a^k}{k+1} \left(B_{k+1} \left(x+ \frac{b}{a}\right) + s\right)
\end{equation}
with $s= - B_{k+1} \left(\frac{b}{a}\right) + \frac{(k+1)\nu}{a^k} \in \mathbb{Q}$,
and, since the number of roots as well as the multiplicities of the roots of a polynomial remain unchanged if we replace its variable by a linear polynomial of that, or if we multiply the polynomial by a nonzero constant, Lemma \ref{lem:Bk+sroots} implies the existence of three roots of odd multiplicity for $\texttt{S}_{a,b}^k \left(x\right) + \nu$. This completes the proof.
\end{proof}

\begin{proof}[Proof of Theorem \ref{thm:linpolS}]
We rewrite equation \eqref{eq:Sling} as
\begin{equation} \label{eq:effprf4}
\texttt{S}_{a,b}^k \left(x\right) - d = cy^{\ell}.
\end{equation}
To prove that $\ell$ is bounded, by Lemma \ref{lem:BBrH}, we only need to show that the polynomial $\texttt{S}_{a,b}^k \left(x\right) - d$ has two distinct roots for every $d\in \mathbb{Q}$. Assuming to the contrary, for some $d\in \mathbb{Q}$, we have
\begin{equation} \label{eq:effprf5}
\texttt{S}_{a,b}^k \left(x\right) - d = R(Ux+V)^{k+1}
\end{equation}
with some $R,U,V\in \mathbb{Q}$. However, as $k\geq 2$, this implies that the derivative
\begin{equation} \label{eq:effprf6}
(\texttt{S}_{a,b}^k \left(x\right) - d)'= \frac{a^k}{k+1} B_{k+1}' \left(x+ \frac{b}{a}\right) = a^k B_{k} \left(x+ \frac{b}{a}\right)
\end{equation}
has a rational root of multiplicity $k$. This contradicts Lemma \ref{lem:Br1}, whence $\ell$ is bounded as required. (Indeed, for $k\leq3$, then the Bernoulli polynomial $B_k$ would have a rational root of multiplicity at least $3$, and for $k=2$, $B_2$ would have a double rational root which are both impossible.)

Now we give a bound for $\max(|x|,|y|)$. For $\ell=2$, the statement is a straightforward consequence of Theorem \ref{thm:quadpolS}. In the sequel, let $\ell \geq 3$. By what we have already proved, we may assume that $\ell$ is fixed. Further, in view of Theorem \ref{thm:quadpolS}, we may suppose that $\ell$ is odd. Clearly, without loss of generality we may assume that in fact $\ell$ is an odd prime. 
By \eqref{eq:effprf4} and Lemma \ref{lem:Brindza}, it suffices to show that the polynomial on the left hand side of \eqref{eq:effprf4} has at least two roots of multiplicities coprime to $\ell$. Suppose to the contrary, i.e., that we have
\begin{equation} \label{eq:effprf7}
\texttt{S}_{a,b}^k \left(x\right) - d = (Kx+L)(w(x))^{\ell},
\end{equation}
with some $K,L \in \mathbb{Q}$, $w \in \mathbb{Q}[x]$. Taking derivatives in \eqref{eq:effprf7}, by \eqref{eq:effprf6}, we obtain
\begin{equation} \label{eq:effprf8}
a^k B_{k} \left(x+ \frac{b}{a}\right) = w(x)^{\ell-1} \left(K w(x) + \ell (Kx+L)w'(x)\right).
\end{equation}
Thus, every root of $w(x)$ is a root of $a^k B_{k} \left(x+ \frac{b}{a}\right)$ of multiplicity at least $\ell-1\geq 2$. This immediately contradicts Lemma \ref{lem:Br1} if $\ell \geq 4$.  
In the case $\ell=3$, Lemma \ref{lem:Br1} implies $w(x)=(x^2-x-\beta)$ with an odd positive integer $\beta$. Then, we obtain from \eqref{eq:effprf8} that $k=6$ and that $B_{6} \left(x+ \frac{b}{a}\right)$ should have a multiple root, which is a contradiction. 
This completes the proof.
\end{proof}

In the sequel, we turn to the alternating case, i.e., to the proofs of Theorems \ref{thm:quadpolT+},\ref{thm:quadpolT-} and \ref{thm:linpolT+},\ref{thm:linpolT-}. Before proving our results, besides the abovementioned auxiliary results, we need the following two lemmas. The first is a deep result of Rakaczki \cite{R2} concerning the root structure of shifted Euler polynomials.

\begin{lemma} \label{lem:R2}
Let $k \geq 7$ be an integer. Then the shifted Euler polynomial $E_k (x) + z$ has at least three simple zeros for arbitrary complex number $z$.
\end{lemma}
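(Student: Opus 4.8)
The plan is to prove the contrapositive. Fix an integer $k\ge 7$ and a complex number $z$, set $F(x)=E_k(x)+z$, and suppose $F$ has at most two simple zeros; I aim for a contradiction. The only property of Euler polynomials needed to start is $E_k'(x)=kE_{k-1}(x)$: since we work in characteristic zero, it forces every multiple zero of $F$ to be a zero of $E_{k-1}$, and, more precisely, a zero of $F$ of multiplicity $m\ge 2$ to be a zero of $E_{k-1}$ of multiplicity exactly $m-1$. So the repeated part of $F$ is dictated entirely by the fixed, $z$-independent polynomial $E_{k-1}$.

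The decisive input — and, I expect, the main obstacle — is an Euler-polynomial analogue of Brillhart's Lemma \ref{lem:Br1} controlling the multiplicities of the zeros of $E_n$: these are at most $2$, with any double zero of a very restricted form. Granting this for $n=k-1$, every multiple zero of $F$ has multiplicity $2$ (up to the harmless possibility of a few triple zeros coming from a double zero of $E_{k-1}$), so
\[
F(x)=E_k(x)+z=v(x)\,u(x)^2
\]
with $v$ squarefree, $\gcd(u,v)=1$, $0\le\deg v\le 2$, $\deg v\equiv k\pmod 2$, and $2\deg u+\deg v=k$. Proving the multiplicity bound for $E_n$ is the technical core; following Rakaczki I would obtain it by a $p$-adic analysis of the coefficients of $E_n$ — von Staudt–Clausen / Newton-polygon type control of their numerators and denominators — which limits the prime powers that can divide $\operatorname{Res}(E_n,E_{n-1})$ and hence the admissible repeated factors. (One might hope to import Brillhart's theorem itself through the identity $E_n(x)=\tfrac{2}{n+1}\bigl(B_{n+1}(x)-2^{n+1}B_{n+1}(x/2)\bigr)$, but its right-hand side is a difference of two Bernoulli polynomials rather than a single shift, so it does not reduce immediately to Lemma \ref{lem:Bk+sroots}.)

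To finish, differentiate $F=vu^2$ to get $kE_{k-1}=F'=u\,(v'u+2vu')$, whence $u\mid E_{k-1}$ with cofactor $\tfrac1k(v'u+2vu')$ of degree $\deg v+\deg u-1$; a bare degree count is consistent, so more structure is needed. I would extract it from the fact that $E_{k-1}$ has \emph{even} index, so that $E_{k-1}(1-x)=E_{k-1}(x)$: together with $2\deg u+\deg v=k$ and the restricted repeated-factor structure of $E_{k-1}$, this forces $v$ into one of the shapes $1$, $2x-1$, $(x-\tfrac12)^2-s^2$ (according to the parity of $k$) and pins down the symmetry of $u$ about $\tfrac12$. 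Substituting into the three-term relation $E_{k-1}(x)+E_{k-1}(x+1)=2x^{k-1}$ — the derivative of the classical identity $E_k(x)+E_k(x+1)=2x^k$ — and evaluating at $x=\tfrac12$, where the values $E_{k-1}(\tfrac12)$, $E_{k-1}(\tfrac32)$ and $E_k(\tfrac12)$ are all given by Euler numbers, together with comparing leading coefficients, yields a numerical identity in $k$, $\deg u$ and the few parameters of $v$ that cannot hold once $k$ exceeds an explicit bound; the finitely many $k\ge 7$ below that bound are then settled by direct computation. This bound is precisely where the hypothesis $k\ge 7$ originates, mirroring the exclusion of $k\in\{4,6\}$ in the Bernoulli analogue (Lemma \ref{lem:Bk+sroots}). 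A complete and self-contained proof is given by Rakaczki in \cite{R2}; in the present paper the lemma is invoked only as an input.
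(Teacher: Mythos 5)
The paper does not prove this lemma at all: it is imported verbatim as ``a deep result of Rakaczki'' from \cite{R2}, and your closing sentence (that the lemma is invoked only as an input) matches the paper exactly. If that citation is all you intend, you are in agreement with the author. But the sketch you offer in between, read as a proof, has a genuine gap, and it is worth being precise about where.

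First, the ingredient you identify as ``the technical core'' --- the bound on multiplicities of zeros of $E_n$ and the restricted shape of its repeated factor --- does not need a new $p$-adic argument: it is exactly Brillhart's result, quoted in this very paper as Lemma \ref{lem:Br2}. The actual core lies in the step you pass over. After reducing to
$E_k(x)+z=v(x)u(x)^2$ with $v$ squarefree, $\gcd(u,v)=1$ and $\deg v\le 2$ (and note that a triple zero, which your own analysis allows when $k$ is even via the double root of $E_{k-1}$ at $x^2-x-1$, is not of this form, so the case split is already incomplete), differentiating gives $u\mid E_{k-1}$ with $\deg u\ge\lfloor (k-2)/2\rfloor$. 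As you observe, a degree count is consistent; what must be shown is that $\gcd\left(E_k(x)+z,\,E_{k-1}(x)\right)$ cannot be this large, i.e.\ that $E_{k-1}$ does not have a divisor $u$ of degree about $k/2$ with $u^2\mid E_k+z$ for some $z$. Your proposed finish --- symmetry of $E_{k-1}$ about $\tfrac12$, the relation $E_{k-1}(x)+E_{k-1}(x+1)=2x^{k-1}$ evaluated at $x=\tfrac12$, and a leading-coefficient comparison --- does not visibly produce the claimed ``numerical identity in $k$, $\deg u$ and the parameters of $v$ that cannot hold''; it is asserted, not derived, and this is precisely the part of Rakaczki's paper that requires a delicate arithmetic (in particular $2$-adic) analysis of Euler numbers rather than a two-line evaluation. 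There is also a parity slip: $E_{k-1}(1-x)=(-1)^{k-1}E_{k-1}(x)$, so the symmetry $E_{k-1}(1-x)=E_{k-1}(x)$ you rely on holds only for odd $k$, while the hardest configuration (a possible triple zero of $E_k+z$) occurs for even $k$. In short: the citation suffices and agrees with the paper; the sketch does not stand on its own.
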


The following result, which describes the multiple roots of Euler polynomials $E_k(x)$ is due to Brillhart \cite{Brill}.

\begin{lemma} \label{lem:Br2}
If $k$ is even, then $E_k(x)$ has no multiple roots. For odd $k$, the only polynomial which can be a multiple factor of $E_k(x)$ over $\mathbb{Q}$ is $x^2-x-1$. Further, the multiplicity of any multiple root of $E_k$ is $2$.
\end{lemma}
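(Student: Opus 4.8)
The plan follows the pattern of Brillhart's proof of the Bernoulli analogue (Lemma~\ref{lem:Br1}), using the Appell relation $E_k'(x)=kE_{k-1}(x)$ (hence $E_k''(x)=k(k-1)E_{k-2}(x)$) and the reflection formula $E_k(1-x)=(-1)^kE_k(x)$. The Appell relation shows that a number $\alpha$ is a root of $E_k$ of multiplicity $\ge2$ exactly when $E_k(\alpha)=E_{k-1}(\alpha)=0$, and of multiplicity $\ge3$ exactly when moreover $E_{k-2}(\alpha)=0$. So I must determine, for each parity of $k$, the common zeros of $E_k$ and $E_{k-1}$, and show that no zero is common to $E_k$, $E_{k-1}$, $E_{k-2}$; this yields all three assertions of the lemma.

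The structural step is to exploit the symmetry. Writing $E_k(x)=\sum_{j=0}^{k}\binom{k}{j}\tfrac{E_j}{2^j}\bigl(x-\tfrac12\bigr)^{k-j}$ and recalling that the Euler numbers $E_j$ vanish for odd $j\ge1$, one sees that $E_k(x)=P_k(x^2-x)$ for even $k$ and $E_k(x)=\bigl(x-\tfrac12\bigr)Q_k(x^2-x)$ for odd $k$, with $P_k,Q_k\in\mathbb Q[w]$. Differentiating and using $E_k'=kE_{k-1}$ gives the clean relations $2P_k'(w)=kQ_{k-1}(w)$ (for even $k$) and $kP_{k-1}(w)=Q_k(w)+(2w+\tfrac12)Q_k'(w)$ (for odd $k$); from these one checks that a common zero $\alpha\neq\tfrac12$ of $E_k$ and $E_{k-1}$ forces $w_0=\alpha^2-\alpha$ to be a multiple root of $P_k$ (resp.\ of $Q_k$), that conversely every multiple root of $P_k$ (resp.\ $Q_k$) arises this way, and that the multiplicity of $\alpha$ in $E_k$ equals that of $w_0$ in $P_k$ (resp.\ $Q_k$). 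The value $\alpha=\tfrac12$ is disposed of directly: $E_k(\tfrac12)=2^{-k}E_k\neq0$ for even $k$, and $E_k'(\tfrac12)=kE_{k-1}(\tfrac12)=k\,2^{-(k-1)}E_{k-1}\neq0$ for odd $k$ (since $k-1$ is even), so $\tfrac12$ is never a multiple root. It therefore remains to prove that $P_k$ is squarefree for even $k$, and that for odd $k$ the only multiple root of $Q_k$ is $w=1$ --- corresponding to the factor $x^2-x-1$, which is irreducible over $\mathbb Q$, so that no linear multiple factor can occur --- and that it has multiplicity exactly $2$; in particular neither $P_k$ nor $Q_k$ has a root of multiplicity $\ge3$, which rules out triple roots of $E_k$.

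This last point is the heart of the matter and where I expect the real work to be. I would argue $2$-adically, as Brillhart does: the values $E_k(0)=-\tfrac{2(2^{k+1}-1)}{k+1}B_{k+1}$ (relevant for odd $k$) and $E_k(\tfrac12)=2^{-k}E_k$ (for even $k$), along with the analogous values of $E_{k-1}$ and $E_{k-2}$, have $2$-adic valuations that are completely pinned down by the von Staudt--Clausen theorem and by the fact that the Euler numbers $E_{2i}$ are odd integers. A hypothetical repeated factor $(x^2-x-c)^2$ with $c\in\mathbb Q$, a second such factor, or a factor of multiplicity $\ge3$, would impose $2$-adic constraints on these values that are incompatible with the ones just computed --- once $c$ is written in lowest terms --- leaving only $x^2-x-1$ for odd $k$ and nothing for even $k$. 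Everything before this valuation estimate --- the reflection formula, the Appell relation, and the functional equation $E_k(x)+E_k(x+1)=2x^k$ --- is formal; the delicate $2$-adic bookkeeping, exactly as in the Bernoulli case treated in \cite{Brill}, is the main obstacle.
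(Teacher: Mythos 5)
The paper itself offers no proof of this lemma: it is quoted as a known result of Brillhart \cite{Brill}, so there is nothing internal to compare your argument against. Measured on its own terms, your reduction is sound as far as it goes. The Appell criterion (multiplicity $\ge 2$ at $\alpha$ iff $E_k(\alpha)=E_{k-1}(\alpha)=0$, multiplicity $\ge 3$ iff also $E_{k-2}(\alpha)=0$), the symmetry decomposition $E_k(x)=P_k(x^2-x)$ for even $k$ and $E_k(x)=(x-\tfrac12)Q_k(x^2-x)$ for odd $k$, the differentiated identities $2P_k'(w)=kQ_{k-1}(w)$ and $kP_{k-1}(w)=Q_k(w)+(2w+\tfrac12)Q_k'(w)$, and the exclusion of $\alpha=\tfrac12$ via $E_k(\tfrac12)=2^{-k}E_k\neq0$ and $E_{k-1}$ of even index -- all of this checks out, and it correctly reduces the lemma to the two statements that $P_k$ is squarefree for even $k$ and that $Q_k$ has at most the one multiple root $w=1$, of multiplicity $2$, for odd $k$. (Indeed, once the even-index case is settled, the ``multiplicity exactly $2$'' clause is automatic, since a triple root of $E_k$ for odd $k$ would force a double root of the even-index polynomial $E_{k-1}$.)

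The genuine gap is that these two reduced statements, which constitute essentially the entire arithmetic content of the lemma, are not proved but only announced. Your final paragraph says that a hypothetical repeated factor ``would impose $2$-adic constraints \ldots\ that are incompatible with the ones just computed,'' but no such constraint is derived and no incompatibility is exhibited; the values $E_k(0)$ and $E_k(\tfrac12)$ are named, yet it is never shown how their $2$-adic valuations obstruct a repeated factor $(x^2-x-c)^2$ with $c\neq1$, nor why $c=1$ survives as the unique exception. Without that computation one has not ruled out, for example, a squarefree part of $P_k$ failing for some even $k$, and the lemma is simply unproven. As it stands the proposal is an accurate roadmap of Brillhart's strategy with the destination missing; to complete it you would need to carry out the mod-$2$ (or $2$-adic) analysis of the coefficients of $P_k$ and $Q_k$ in detail, which is precisely the part of \cite{Brill} that cannot be waved through by analogy with the Bernoulli case, since the relevant integrality input there (von Staudt--Clausen) must be replaced by the arithmetic of Euler numbers and of the values $2(1-2^{k+1})B_{k+1}/(k+1)$.
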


For simplicity we prove Theorems \ref{thm:quadpolT+} and \ref{thm:quadpolT-} jointly. We introduce the following notation. Let $\texttt{T}_{a,b}^{k\pm}(x) \in \left\{\texttt{T}_{a,b}^{k+}(x),\texttt{T}_{a,b}^{k-}(x)\right\}$.

\begin{proof}[Proof of Theorems \ref{thm:quadpolT+} and \ref{thm:quadpolT-}]
Let $g \in \mathbb{Q}[x]$ be an arbitrary polynomial with  $\deg g = 2$, and assume that equation \eqref{eq:Tg+} or \eqref{eq:Tg-} holds. We adapt the argument of the proof of Theorem \ref{thm:quadpolS} to our case.
There exist rational numbers $A,B,C$ with $A \neq 0$ such that $g(x) = Ax^2+Bx+C$, and one can rewrite \eqref{eq:Tg+} or \eqref{eq:Tg-} as
\begin{equation} \label{eq:effprf9}
\texttt{T}_{a,b}^{k\pm} \left(x\right) + \nu = A(y+\mu)^2,
\end{equation}
with $\mu=\frac{B}{2A}$ and $\nu=\frac{B^2-4AC}{4A}$. Thus, to bound $x,y$, in view of Lemma \ref{lem:Brindza}, it is sufficient to show that the polynomial on the left hand side of \eqref{eq:effprf9} has at least three roots of odd multiplicity. By \eqref{eq:How2}, we have
\begin{equation} \label{eq:effprf10}
\texttt{T}_{a,b}^{k\pm} \left(x\right) + \nu=\pm\frac{a^k}{2} \left(E_{k} \left(x+ \frac{b}{a}\right) + s\right)
\end{equation}
with $s= E_{k} \left(\frac{b}{a}\right) + \frac{2\nu}{a^k} \in \mathbb{Q}$,
whence, as $k\geq 7$, Lemma \ref{lem:R2} implies the existence of three simple roots for $\texttt{T}_{a,b}^{k\pm} \left(x\right) + \nu$, which completes the proof.
\end{proof}

Similarly, we prove Theorems \ref{thm:linpolT+} and \ref{thm:linpolT-} jointly by using the above notation.

\begin{proof}[Proof of Theorems \ref{thm:linpolT+} and \ref{thm:linpolT-}]
We adapt the argument of the proof of Theorem \ref{thm:linpolS}.
Rewriting equation \eqref{eq:Tling+} or \eqref{eq:Tling-} as
\begin{equation} \label{eq:effprf11}
\texttt{T}_{a,b}^{k\pm} \left(x\right) - d = cy^{\ell},
\end{equation}
in view of Lemma \ref{lem:BBrH}, we can prove that $\ell$ is bounded if we can show that the polynomial $\texttt{T}_{a,b}^{k\pm} \left(x\right) - d$ has two distinct roots for every $d\in \mathbb{Q}$. Let us assume to the contrary. Then, for some $d\in \mathbb{Q}$, we have
\begin{equation} \label{eq:effprf12}
\texttt{T}_{a,b}^{k\pm} \left(x\right) - d = R(Ux+V)^{k}
\end{equation}
with some $R,U,V\in \mathbb{Q}$. However, as $k\geq 7$, this implies that the derivative
\begin{equation} \label{eq:effprf13}
(\texttt{T}_{a,b}^{k\pm} \left(x\right) - d)'= \pm \frac{a^k}{2} E_{k}' \left(x+ \frac{b}{a}\right) = \pm \frac{ka^k}{2} E_{k-1} \left(x+ \frac{b}{a}\right)
\end{equation}
has a rational root of multiplicity at least $6$, which contradicts Lemma \ref{lem:Br2}. Hence $\ell$ is bounded as required.

If $\ell=2$, the effective upper bound on $\max(|x|,|y|)$, in equations \eqref{eq:Tling+} or \eqref{eq:Tling-}, follows directly from Theorems \ref{thm:quadpolT+} and \ref{thm:quadpolT-}, respectively. Let $\ell \geq 3$, and by the first part of the proof, we may assume that $\ell$ is fixed. Further, in view of Theorems \ref{thm:quadpolT+} and \ref{thm:quadpolT-}, we may suppose, without loss of generality, that $\ell$ is an odd prime.
By \eqref{eq:effprf11} and Lemma \ref{lem:Brindza}, it suffices to show that the polynomial on the left hand side of \eqref{eq:effprf11} has at least two roots of multiplicities coprime to $\ell$. Assuming to the contrary, we have
\begin{equation} \label{eq:effprf14}
\texttt{T}_{a,b}^{k\pm} \left(x\right) - d = (Kx+L)(w(x))^{\ell},
\end{equation}
with some $K,L \in \mathbb{Q}$, $w \in \mathbb{Q}[x]$. Taking derivatives in \eqref{eq:effprf14}, by \eqref{eq:effprf13}, we obtain
\begin{equation} \label{eq:effprf15}
\pm \frac{ka^k}{2} E_{k-1} \left(x+ \frac{b}{a}\right) = w(x)^{\ell-1} \left(K w(x) + \ell (Kx+L)w'(x)\right).
\end{equation}
Thus, every root of $w(x)$ is a root of $\pm \frac{ka^k}{2} E_{k-1} \left(x+ \frac{b}{a}\right)$ of multiplicity at least $\ell-1\geq 2$. This immediately contradicts Lemma \ref{lem:Br2} if $\ell \geq 4$.  
In the case $\ell=3$, Lemma \ref{lem:Br2} implies $w(x)=x^2-x-1$. Then, we infer from \eqref{eq:effprf15} that $k=7$ and that $E_{6} \left(x+ \frac{b}{a}\right)$ should have a multiple root, which is a contradiction again by Lemma \ref{lem:Br2}. 
This completes the proof.

\end{proof}

\section*{Acknowledgements}

Research was supported in part by the HUN-REN Hungarian Research Network and by the NKFIH grants ANN130909 and K128088 of the Hungarian National Research, Development and Innovation Office.

\bibliographystyle{amsplain}

\end{document}